\definecolor{seaborngreen}{rgb}{0.3333333333333333, 0.6588235294117647, 0.40784313725490196}  
\definecolor{seaborncyan}{rgb}{0.39215686274509803, 0.7098039215686275, 0.803921568627451}  
\definecolor{seabornblue}{rgb}{0.2980392156862745, 0.4470588235294118, 0.6901960784313725}  
\definecolor{seabornpurple}{rgb}{0.5058823529411764, 0.4470588235294118, 0.6980392156862745}  
\definecolor{seabornred}{rgb}{0.7686274509803922, 0.3058823529411765, 0.3215686274509804}  
\definecolor{seabornorange}{rgb}{0.958, 0.476, 0.206}  
\definecolor{seabornsand}{rgb}{0.8, 0.7254901960784313, 0.4549019607843137}  
\newcommand{\bbP}{\mathbb{P}}
\newcommand{\bbQ}{\mathbb{Q}}
\newcommand{\bbR}{\mathbb{R}}
\newcommand{\bbU}{\mathbb{U}}
\newcommand{\bbV}{\mathbb{V}}
\newcommand{\bbX}{\mathbb{X}}
\newcommand{\bfQ}{\mathbf{Q}}
\newcommand{\bfe}{\mathbf{e}}
\newcommand{\bfr}{\mathbf{r}}
\newcommand{\bfu}{\mathbf{u}}
\newcommand{\bfv}{\mathbf{v}}
\newcommand{\bfw}{\mathbf{w}}
\newcommand{\bfzero}{\mathbf{0}}
\newcommand{\bfmu}{\bm{\mu}}
\newcommand{\bfomega}{\bm{\omega}}
\newcommand{\bfvarphi}{\bm{\varphi}}
\newcommand{\bftheta}{\bm{\theta}}
\newcommand{\rmd}{\mathrm{d}}
\newcommand{\calI}{\mathcal{I}}
\DeclareMathOperator{\curl}{curl}
\let\div\relax
\DeclareMathOperator{\div}{div}
\let\Re\relax
\newcommand{\Re}{\mathrm{Re}}
\let\Pr\relax
\newcommand{\Pr}{\mathrm{Pr}}
\newcommand{\vertiii}[1]{{\left\vert\kern-0.25ex\left\vert\kern-0.25ex\left\vert #1
    \right\vert\kern-0.25ex\right\vert\kern-0.25ex\right\vert}}
\crefname{hypothesis}{Hypothesis}{Hypotheses}
\title{Enforcing conservation laws and dissipation inequalities numerically via auxiliary variables\thanks{Submitted to the editors DATE.
\funding{This work was funded by the Engineering and Physical Sciences Research Council
[grant numbers EP/R029423/1 and EP/W026163/1], the EPSRC Energy Programme [grant number
EP/W006839/1], a CASE award from the UK Atomic Energy Authority, the Donatio Universitatis
Carolinae Chair ``Mathematical modelling of multicomponent systems'', and the UKRI Digital Research Infrastructure Programme through the Science and Technology Facilities Council's Computational Science Centre for Research Communities (CoSeC).
For the purpose of open access, the authors have applied a CC BY public copyright licence to any author accepted manuscript (AAM) arising from this submission. No new data were generated or analyzed during this work.}
}
}
\author{Boris D.~Andrews\thanks{Mathematical Institute, University of Oxford, UK
  (\email{boris.andrews@maths.ox.ac.uk}).}
\and Patrick E.~Farrell\thanks{Mathematical Institute, University of Oxford, UK
     and
     Mathematical Institute, Faculty of Mathematics and Physics, Charles University, Czechia
  (\email{patrick.farrell@maths.ox.ac.uk}).}
}
\begin{document}

\maketitle

\begin{abstract}
We propose a general strategy for enforcing multiple conservation laws and dissipation inequalities in the numerical solution of initial value problems. The key idea is to represent each conservation law or dissipation inequality by means of an associated test function; we introduce auxiliary variables representing the projection of these test functions onto a discrete test set, and modify the equation to use these new variables. We demonstrate these ideas by their application to the Navier--Stokes equations. We generalize to arbitrary order the energy-dissipating and helicity-tracking scheme of Rebholz for the incompressible Navier--Stokes equations, and devise \added{a} \deleted{the first} time discretization of the compressible equations that conserves mass, momentum, and energy, and provably dissipates entropy.
\end{abstract}

\begin{keywords}
geometric numerical integration, structure preservation, conservation laws, dissipation inequalities.
\end{keywords}

\begin{MSCcodes}
65M99, 37K99, 37L99
\end{MSCcodes}

\section{Introduction}

Structure-preserving discretizations in time for differential equation initial value problems (geometric numerical integrators) have been intensively studied because of their fundamental importance for accurate numerical simulation \cite{deVogelaere_1956, SanzSerna_Calvo_1994, Budd_Piggott_2003, Hairer_et_al_2006, Hairer_Lubich_Wanner_2006, Christiansen_MuntheKaas_Owren_2011, Blanes_Casas_2017, Iserles_Quispel_2018}.
In this work we develop a new approach for devising arbitrary-order discretizations that conserve multiple arbitrary invariants and accurately reproduce dissipation structures.

The key novelty of our approach is the systematic introduction of auxiliary variables to modify existing schemes.
We will show that each conservation law or dissipation inequality to be preserved corresponds to an associated test function;
the auxiliary variables introduced are certain projections of these associated test functions onto a discrete test set.
Our approach gives conservative and accurately dissipative schemes for previously inaccessible problems, while generalizing several distinct ideas from the literature, including the methods proposed in \cite{Simo_Armero_1994, Rebholz_2007, Hu_Lee_Xu_2021, Laakmann_Hu_Farrell_2023} and the frameworks proposed in \cite{McLachlan_Quispel_Robidoux_1999, Betsch_Steinmann_2000a, Cohen_Hairer_2011, Hairer_Lubich_2014, Egger_Habrich_Shashkov_2021, Giesselmann_Karsai_Tscherpel_2025}.

We present our approach in Section~\ref{sec:incompressible}, employing as a didactic example the incompressible Navier--Stokes equations in three dimensions.
This section derives an arbitrary-order generalization of the energy-dissipating and helicity-tracking discretization proposed by Rebholz \cite{Rebholz_2007}.
We then employ the same ideas in Section~\ref{sec:compressible} to the more complex context of the compressible Navier--Stokes equations, deriving \added{a} \deleted{what we believe to be the first} time discretization of these equations that conserves mass, momentum, and energy, while also dissipating entropy\added{, without needing to reparametrise the system to make these quantities of interest linear}.
We discuss related literature and the relationships to our proposed framework in Section~\ref{sec:related_literature}.
We conclude in Section~\ref{sec:conclusions}.

\section{General strategy: incompressible Navier--Stokes}\label{sec:incompressible}

We now present the general framework.

To fix ideas, throughout this section we will employ the incompressible Navier--Stokes equations as our running example.
The equations can be written in strong form as
\begin{equation}\label{eq:strongincompns}
    \dot{\bfu}  =  \bfu \times \curl\bfu - \nabla p - \frac{1}{\Re}\Delta\bfu,  \quad
             0  =  \div\bfu,
\end{equation}
over a bounded Lipschitz domain $\Omega \subset \bbR^3$.
Here $\bfu: \bbR_+ \times \Omega \to \bbR^3$ is the velocity, $p: \bbR_+ \times \Omega \to \bbR$ is the total (or Bernoulli) pressure, and $\Re > 0$ is the Reynolds number; we use a rotational (or Lamb) form for the convective term.
We consider periodic boundary conditions (BCs), with the additional constraint on the initial condition
\begin{equation}\label{eq:zero_motion_ic}
    \int_\Omega \bfu(0) = \bfzero.
\end{equation}
If \eqref{eq:zero_motion_ic} holds then any solution to \eqref{eq:strongincompns} satisfies $\int_\Omega \bfu = \bfzero$ at all times.
This condition is included to ensure certain energy estimates exist on solutions to the scheme, as required for its analysis;
in particular, we require that $\|\nabla\bfu\|$ defines a norm on $\bfu$, where $\|\cdot\|$ denotes the $L^2(\Omega)$ norm.

Our quantities of interest are the (kinetic) energy and helicity
\begin{equation}\label{eq:incomp_quantities_of_interest}
    Q_1(\bfu)  \coloneqq  \frac{1}{2}\|\bfu\|^2,  \quad
    Q_2(\bfu)  \coloneqq  \frac{1}{2}(\bfu, \curl\bfu),
\end{equation}
respectively.
Here $(\cdot, \cdot)$ denotes the $L^2(\Omega)$ inner product.
Under periodic BCs, $Q_1$ and $Q_2$ are conserved in solutions of the formal ideal limit $\Re = \infty$, while $Q_1$ is necessarily dissipated for finite $\Re$; we wish to construct a timestepping scheme that preserves these behaviors.

We proceed in six stages.

\subsection{Definition of semi-discrete form}\label{sec:general_framework_a}
We define first an abstract semi-discrete formulation of a transient PDE, discretized in space only. This is posed over a general affine space
\begin{equation}\label{eq:affinespace}
    \bbX \coloneqq \left\{u \in C^1(\bbR_+; \bbU) : u(0) \text{ satisfies known initial data}\right\}.
\end{equation}
Here, $\bbU$ denotes an appropriate finite-dimensional spatial function space\footnote{Since $\bbU$ must be a function space, not an affine space, this framework is in general only applicable to problems with homogeneous or periodic BCs.}.
The abstract semi-discrete weak problem is then as follows: find $u \in \bbX$ such that
\begin{equation}\label{eq:generalweak}
    M(u; \dot{u}, v) = F(u; v)
\end{equation}
at all times $t \in \bbR_+$ and for all $v \in \bbU$.
Here $M : \bbU \times \bbU \times \bbU \to \bbR$ is possibly nonlinear in $u$, but linear in $\dot{u}$ and $v$; this is the significance of the semicolon.
Similarly, $F : \bbU \times \bbU \to \bbR$ is possibly nonlinear in $u$, but linear in the test function $v$.

Turning to our running example~\eqref{eq:strongincompns} we construct a vanilla semi-discretization, which we show can be written in the form \eqref{eq:generalweak}.
Let $\bbV \subset H^1_{\text{per}}(\Omega; \mathbb{R}^3)$, $H^1$-vector fields with periodic boundary conditions, and $\bbQ \subset L^2_0(\Omega)$ be suitable inf-sup stable \cite[Chap.~49]{Ern_Guermond_2021b} finite-dimensional function spaces.
The first formulation is: find $(\bfu, p) \in C^1(\bbR_+; \bbV) \times C^0(\bbR_+; \bbQ)$, satisfying known initial conditions in $\bfu$ such that
\begin{equation}\label{eq:incompnsweak_lagrange}
    (\dot{\bfu}, \bfv)  =  (\bfu \times \curl\bfu, \bfv) + (p, \div\bfv) - \frac{1}{\Re}(\nabla\bfu, \nabla\bfv),  \quad
                     0  =  (\div\bfu, q),
\end{equation}
at all times $t \in \bbR_+$ and for all $(\bfv, q) \in \bbV \times \bbQ$.
In its current form, there are no time derivatives on the pressure term, $p$, implying \eqref{eq:incompnsweak_lagrange} can not be written in the form of \eqref{eq:generalweak}; specifically, \eqref{eq:incompnsweak_lagrange} represents a differential-algebraic system \cite{Ascher_Petzold_1998}.
To remedy this, define the discretely divergence-free subspace $\bbU \subset \bbV$,
\begin{equation}\label{eq:u_space_incompns}
    \bbU \coloneqq \left\{\bfu \in \bbV : (\div\bfu, q) = 0 \text{ for all } q \in \bbQ \text{ and } \int_\Omega \bfu = \bfzero\right\}.
\end{equation}
We can effectively eliminate both $p$ and the mass-conservation equation by posing the semi-discretization in $\bbU$, while further incorporating the condition $\int_\Omega \bfu = \bfzero$;
$\bbX$ is then defined from $\bbU$ as in \eqref{eq:affinespace}.
The semi-discretization then states: find $\bfu \in \bbX$ such that
\begin{equation}\label{eq:incompnsweak}
    (\dot{\bfu}, \bfv)  =  (\bfu \times \curl\bfu, \bfv) - \frac{1}{\Re}(\nabla\bfu, \nabla\bfv)
\end{equation}
at all times $t \in \bbR_+$ and for all $\bfv \in \bbU$\footnote{In practice, after discretizing in time the implementation would typically enforce the constraints in $\bbU$ through Lagrange multipliers, more closely resembling \eqref{eq:incompnsweak_lagrange}. However, the formulation in terms of discretely divergence-free $\bbU$ simplifies the construction of the scheme.}. This is in the form of \eqref{eq:generalweak} with $M$, $F$ given by
\begin{equation}
    M(\bfu; \dot{\bfu}, \bfv)  \coloneqq  (\dot{\bfu}, \bfv),  \quad
    F(\bfu; \bfv)              \coloneqq  (\bfu \times \curl\bfu, \bfv) - \frac{1}{\Re}(\nabla\bfu, \nabla\bfv).
\end{equation}

\subsection{Definition of timestepping scheme}\label{sec:general_framework_b}
To fully discretize our problem, we define $\bbX_n$ on a timestep $T_n = [t_n, t_{n+1}]$.
We employ polynomials in time of degree $S \ge 1$:
\begin{equation}\label{eq:solution_space}
    \bbX_n  \coloneqq  \!\left\{u \in \bbP_S(T_n; \bbU) : u(t_n) \text{ satisfies known initial data}\right\}\!.
\end{equation}
The abstract timestepping scheme is then as follows: find $u \in \bbX_n$ such that
\begin{equation}\label{eq:generalweak_time}
    \calI_n[M(u; \dot{u}, v)]  =  \calI_n[F(u; v)]
\end{equation}
for all $v \in \dot{\bbX}_n = \bbP_{S-1}(T_n; \bbU)$, where $\calI_n$ is a chosen linear operator that approximates the integral over $T_n$ (a quadrature rule):
\begin{equation}
    \calI_n[\phi] \approx \int_{T_n} \phi.
\end{equation}
The approximation must be sign-preserving, i.e.
\begin{subequations}
\begin{equation}\label{eq:Isignpreserving}
    \phi \ge 0 \implies \calI_n[\phi] \ge 0,
\end{equation}
appropriately scaled in $\Delta t_n = t_{n+1} - t_n$, i.e.
\begin{equation}\label{eq:Iscaling}
    \calI_n[1] = \Delta t_n,
\end{equation}
\end{subequations}
and the map $\phi \mapsto \calI_n[\phi^2]^\frac{1}{2}$ must define a norm on $\bbP_{S-1}(T_n)$ to avoid degeneracy in the time discretization.\footnote{
    \added{In the case where $M(u; \cdot, \cdot)$ is independent of $u$ and defines an inner product on $\bbU$, this condition implies $\calI_n[M(\cdot, \cdot)]$ defines an inner product on $\dot{\bbX}_n$;
    in the trivial case $F = 0$, it is thus necessary and sufficient to ensure the discretization is well-posed.
    While a proof is not presented here, it is reasonable to expect that it remains a necessary condition for well-posedness in the general case.
    The use of $\calI_n$ for which $\phi \mapsto \calI_n[\phi^2]^\frac{1}{2}$ does not define a norm on $\bbP_{S-1}(T_n)$ would be comparable to proposing a collocation method with fewer collocation points than the polynomial degree.}
}
Examples of such linear operators include the exact integral, and any $S$-stage quadrature rule with positive weights.

No specific choice of $\calI_n$ is required for the incompressible Navier--Stokes example.
For concreteness, we might choose $\calI_n$ to be a Gauss--Legendre quadrature, yielding a Gauss collocation method as our base timestepping scheme.

\subsection{Identification of associated test functions}\label{sec:general_framework_c}
The properties we wish to preserve (conservation laws or dissipation structures) are associated with particular choices of test functions.
For Fr\'echet-differentiable quantities of interest $(Q_p : \bbU \to \bbR)_{p=1}^P$, we assume there exist test functions $(w_p(u))_{p=1}^P$ such that the Fr\'echet derivatives $Q_p'(u; v) = M(u; v, w_p(u))$ for general $u, v$.
Consequently, for $u$ an exact solution of the PDE,
\begin{equation}\label{eq:general_preservation}
      Q_p(u(t_{n+1})) - Q_p(u(t_n))
    = \int_{T_n}Q_p'(u; \dot{u})
    = \int_{T_n}M(u; \dot{u}, w_p(u))
    = \int_{T_n}F(u; w_p(u)).
\end{equation}
\added{For each $p$, the behaviour of $Q_p$ is then encoded in the sign of $F(u; w_p(u))$:
for conserved $Q_p$, $F(u; w_p(u)) = 0$;
for dissipated $Q_p$, $F(u; w_p(u)) \le 0$.}

\added{This definition of $(w_p(u))$ is motivated by the general semi-discrete \eqref{eq:generalweak} and discrete \eqref{eq:generalweak_time} schemes.
These functions on $u$ are precisely those that, for each $p$, were we able to consider $v = w_p(u)$ in \eqref{eq:generalweak} (i.e.~if $w_p(u)$ were necessarily contained in $\bbU$), the desired conservation or dissipation structures on $Q_p(u)$ would necessarily hold exactly at the semi-discrete level without modification;
moreover, were we able to consider $v = w_p(u)$ in \eqref{eq:generalweak_time} (i.e.~if $w_p(u)$ were necessarily contained in $\dot{\bbX}_n$) and $\calI_n$ were the exact integral, these structures would hold on the fully discrete level also.
No constraints, however, are posed here on the space containing $w_p(u)$.
It is not generally true that $w_p(u)$ is contained in either $\bbU$ or $\dot{\bbX}_n$.}

\deleted{Note, no constraints are posed here on the space containing $w_p(u)$;
it is not generally true that $w_p(u) \in \bbU$.
For each $p$, the behaviour of $Q_p$ is then encoded in the sign of $F(u; w_p(u))$;
in particular for conserved $Q_p$, $F(u; w_p(u)) = 0$, whereas for dissipated $Q_p$, $F(u; w_p(u)) \le 0$.}

For the incompressible Navier--Stokes equations,
the Fr\'echet derivatives of our quantities of interest~\eqref{eq:incomp_quantities_of_interest} are
\begin{equation}
Q_1'(\bfu; \bfv) = (\bfu, \bfv), \quad Q_2'(\bfu; \bfv) = (\curl \bfu, \bfv),
\end{equation}
and with $M$ the $L_2$ inner product, we conclude that the associated test functions are $\bfw_1(\bfu) = \bfu$ and $\bfw_2(\bfu) = \curl \bfu$ respectively.

Testing the equations with $\bfu \, (= \bfw_1(\bfu))$ yields the update formula for the kinetic energy $Q_1$:
\begin{subequations}\label{eq:incompnske}
\begin{align}
        Q_1(\bfu(t_{n+1})) - Q_1(\bfu(t_n))
    &=  \int_{T_n}(\bfu, \dot{\bfu})  \\
    &=  \int_{T_n}\!\left[(\bfu \times \curl\bfu, \bfu) - \frac{1}{\Re}(\nabla\bfu, \nabla\bfu)\right]\! \\
    &=  - \frac{1}{\Re}\int_{T_n}\|\nabla\bfu\|^2 \le 0.
\end{align}
\end{subequations}
\added{Since $\bfu \, (= \bfw_1(\bfu)) \in \bbU$, this dissipation structure necessarily holds for the semi-discrete form \eqref{eq:incompnsweak} without modification.}
Similarly, testing the equations with $\curl\bfu \, (= \bfw_2(\bfu))$ yields the update formula for the helicity $Q_2$:
\begin{subequations}\label{eq:incompnshel}
\begin{align}
        Q_2(\bfu(t_{n+1})) - Q_2(\bfu(t_n))
    &=  \int_{T_n}(\curl\bfu, \dot{\bfu})  \\
    &=  \int_{T_n}\!\left[(\bfu \times \curl\bfu, \curl\bfu) - \frac{1}{\Re}(\nabla\bfu, \nabla \curl\bfu)\right]\! \\
    &=  - \frac{1}{\Re}\int_{T_n}(\nabla\bfu, \nabla\curl\bfu).
\end{align}
\end{subequations}
\added{Since $\curl\bfu \, (= \bfw_2(\bfu))$ is not necessarily contained in $\bbU$, this behaviour is not necessarily preserved at the semi-discrete level.}
In each case we apply integration by parts using the periodic BCs in $\bfu$, while the advection terms vanish due to properties of the cross product.
We note that $Q_1$ is dissipated for finite $\Re$ while $Q_2$ can increase or decrease, and that both $Q_1$ and $Q_2$ are conserved in the ideal limit $\Re = \infty$.

\subsection{Introduction of auxiliary variables}\label{sec:general_framework_d}
Our aim is to replicate the conservation/dissipation properties \eqref{eq:general_preservation} discretely.
However, as it stands this cannot be done, as in general $w_p(u) \not\in \dot{\bbX}_n$;
they are not valid choices of discrete test functions.
We therefore introduce auxiliary variables $(\tilde{w}_p)_{p=1}^P$ into the formulation, computing approximations to the associated test functions $(w_p(u))$ within the discrete test space $\dot{\bbX}_n$. Namely, for all $p = 1, \dots, P$, $\tilde{w}_p \in \dot{\bbX}_n$ is defined weakly such that
\begin{equation}\label{eq:av_definition}
    \calI_n[M(u; v_p, \tilde{w}_p)] = \int_{T_n}Q'_p(u; v_p) \quad \left(= \int_{T_n}M(u; v_p, w_p(u))\right)\!,
\end{equation}
for all $v_p \in \dot{\bbX}_n$.

\begin{remark}
    \added{The right-hand side $\int_{T_n}Q'_p(u; v_p)$ of \eqref{eq:av_definition} employs exact integrals in time $\int_{T_n}\!$. As these integrals reflect those recovered when applying the fundamental theorem of calculus to each quantity of interest $Q_p$ over the timestep $T_n$ (outlined in Theorem~\ref{th:avcpg_sp} below), they may not in general be substituted for $\calI_n$, nor any other quadrature rule, without breaking the structure-preserving properties of the discretization. Schemes derived from our framework are thus necessarily implicit in time, since these exact integrals depend on the final value $u(t_{n+1})$ of $u$ over $T_n$.}

    \added{Note, it is often not possible to evaluate this integral exactly, in particular when the integrand is non-polynomial (see e.g.~the compressible Navier--Stokes example considered in Section~\ref{sec:compressible}). In such cases, we employ a quadrature rule in time of sufficiently high degree for this term that the quadrature error is on the order of machine precision.}
\end{remark}

For the incompressible Navier--Stokes system, $(\tilde{\bfu}, \tilde{\bfomega}) \, (= (\tilde{\bfw}_1, \tilde{\bfw}_2)) \in (\dot{\bbX}_n)^2$ are defined weakly such that
\begin{align}\label{eq:incompnsavs}
    \calI_n[(\bfv_1, \tilde{\bfu})]    =  \int_{T_n}(\bfu, \bfv_1),  \quad
    \calI_n[(\bfv_2, \tilde{\bfomega})]  =  \int_{T_n}(\curl\bfu, \bfv_2),
\end{align}
for all $(\bfv_1, \bfv_2) \in (\dot{\bbX}_n)^2$.
In particular, whereas $\bfu$ is an approximation of the velocity that is continuous across time intervals of polynomial degree $S$, $\tilde{\bfu}$ is another approximation of the velocity with the same spatial discretization that is discontinuous across time intervals of polynomial degree $S-1$.\footnote{
    \added{Note, in the continuous case $\curl\bfu$ should both be divergence-free by $\div\curl = 0$, and satisfy the constraint $\int_\Omega \curl\bfu = \bfzero$ by Stokes' theorem.
    These results are continuous analogues of the restrictions on $\bbU$ in \eqref{eq:u_space_incompns};
    as such, it is appropriate to approximate $\curl \bfu$ by $\tilde{\bfomega} \in \dot{\bbX}_n = \bbP_{S-1}(T_n; \bbU)$.}
}

\begin{remark}
    In some cases, certain auxiliary variables $\tilde{w}_p$ can be computed explicitly, and are therefore not needed in the implementation. In our running example, this is the case for $\tilde{\bfu} \, (= \tilde{\bfw}_1)$.
\end{remark}

\subsection{Modification of right-hand side}\label{sec:general_framework_e}
Finally, we must define $\tilde{F}$, a modification of $F$ in \eqref{eq:generalweak_time}, so that when the test function $v$ is chosen to be an auxiliary variable we recover the associated conservation or dissipation law.

More specifically, we require the construction of $\tilde{F} : \bbU \times \bbU^P \times \bbU \to \mathbb{R}$ with the following properties:
\begin{enumerate}
    \item $\tilde{F}(u, (\tilde{w}_p); v)$ is linear in its final argument.
    \item $\tilde{F}$ coincides with $F$ when evaluated at the associated test functions: for all $(u, v) \in \bbU \times \bbU$,
    \begin{equation}\label{eq:tildeFprop1}
        \tilde{F}(u, (w_p(u)); v) = F(u; v),
    \end{equation}
    when the left-hand side is well-defined.
    \item $\tilde{F}$ preserves the conservation/dissipation structures of $F$: for each $q = 1, \dots, P$,
      \begin{align}
      \label{eq:tildeFprop2}
      \text{if } F(u; w_q(u)) = 0, &\text{ then }\tilde{F}(u, (\tilde{w}_p); \tilde{w}_q) = 0; \nonumber \\
      \text{if } F(u; w_q(u)) \ge 0, &\text{ then }\tilde{F}(u, (\tilde{w}_p); \tilde{w}_q) \ge 0; \\
      \text{if } F(u; w_q(u)) \le 0, &\text{ then }\tilde{F}(u, (\tilde{w}_p); \tilde{w}_q) \le 0. \nonumber
      \end{align}
\end{enumerate}
This process is problem-specific, requires some judgement, and is best understood by example.

For the incompressible Navier--Stokes equations, with the auxiliary variables defined, the choice $\bfv = \tilde{\bfu} \, (= \tilde{\bfw}_1)$ is now valid in \eqref{eq:generalweak_time}.
We wish to replicate the energy dissipation law \eqref{eq:incompnske} when this choice is made.
By inspection, if we define
\begin{equation}\label{eq:incompnsintermediateF}
    \tilde{F}(\bfu, \tilde{\bfu}; \bfv) \coloneqq (\tilde{\bfu} \times \curl\bfu, \bfv) - \frac{1}{\Re}(\nabla\tilde{\bfu}, \nabla\bfv),
\end{equation}
then when we test with $\bfv = \tilde{\bfu}$ in \eqref{eq:incompnsintermediateF}
\begin{equation}\label{eq:tildeFprop2_energy}
    \tilde{F}(\bfu, \tilde{\bfu}; \tilde{\bfu}) = - \frac{1}{\Re}\|\nabla\tilde{\bfu}\|^2 \le 0,
\end{equation}
satisfying \eqref{eq:tildeFprop2} for $q = 1$.
\added{Recalling that $\tilde{\bfomega}$ represents a discrete approximation to $\curl\bfu$,} to satisfy \eqref{eq:tildeFprop2} for $q = 2$ we further modify \eqref{eq:incompnsintermediateF} to recover the helicity law \eqref{eq:incompnshel} by defining
\begin{equation}\label{eq:incompnsfinalF}
    \tilde{F}(\bfu, (\tilde{\bfu}, \tilde{\bfomega}); \bfv) \coloneqq (\tilde{\bfu} \times \tilde{\bfomega}, \bfv) - \frac{1}{\Re}(\nabla\tilde{\bfu}, \nabla\bfv);
\end{equation}
thus, when further testing with $\bfv = \tilde{\bfomega} \, (= \tilde{\bfw}_2)$ in \eqref{eq:incompnsfinalF},
\begin{equation}\label{eq:tildeFprop2_helicity}
    \tilde{F}(\bfu, (\tilde{\bfu}, \tilde{\bfomega}); \tilde{\bfomega}) = - \frac{1}{\Re}(\nabla\tilde{\bfu}, \nabla\tilde{\bfomega}).
\end{equation}
In the ideal case $\Re = \infty$, both (\ref{eq:tildeFprop2_energy},~\ref{eq:tildeFprop2_helicity}) evaluate to zero, preserving the conservation structures.

\subsection{Construction of structure-preserving scheme}\label{sec:general_framework_f}
With $\tilde{F}$ defined, the final structure-preserving scheme is as follows.

\begin{definition}[Final discretization]
    Find $(u, (\tilde{w}_p)) \in \bbX_n \times \dot{\bbX}_n^P$ such that
    \begin{subequations}\label{eq:generalavcpgweak}
    \begin{align}
        \calI_n[M(u; \dot{u}, v)] &= \calI_n[\tilde{F}(u, (\tilde{w}_p); v)], \label{eq:generalavcpgweaka} \\
        \calI_n[M(u; v_p, \tilde{w}_p)] &= \int_{T_n}Q'_p(u; v_p), \label{eq:generalavcpgweakb}
    \end{align}
    \end{subequations}
    for all $(v, (v_p)) \in \dot{\bbX}_n \times \dot{\bbX}_n^P$.
\end{definition}


For the incompressible Navier--Stokes equations, the final energy- and helicity-preserving scheme is as follows: find $(\bfu, (\tilde{\bfu}, \tilde{\bfomega})) \in \bbX_n \times (\dot{\bbX}_n)^2$ such that
\begin{subequations}\label{eq:incompnsavcpg}
\begin{align}
    \calI_n[(\dot{\bfu}, \bfv)]        &=  \calI_n\!\left[(\tilde{\bfu} \times \tilde{\bfomega}, \bfv) - \frac{1}{\Re} (\nabla\tilde{\bfu}, \nabla\bfv)\right]\!,  \label{eq:incompnsavcpg_a}  \\
    \calI_n[(\tilde{\bfu}, \bfv_1)]    &=  \int_{T_n}(\bfu, \bfv_1),  \label{eq:incompnsavcpg_b}  \\
    \calI_n[(\tilde{\bfomega}, \bfv_2)]  &=  \int_{T_n}(\curl\bfu, \bfv_2),  \label{eq:incompnsavcpg_c}
\end{align}
\end{subequations}
for all $(\bfv, (\bfv_1, \bfv_2)) \in \dot{\bbX}_n \times (\dot{\bbX}_n)^2$.

\subsection{Analysis: structure-preserving properties}

\begin{theorem}[Structure preservation of the framework]\label{th:avcpg_sp}
    Where solutions to \eqref{eq:generalavcpgweak} exist, they preserve the sign of the changes to the functionals $Q_q(u)$, $q=1, \dots, P$, across each timestep. In particular, if $Q_q(u)$ is conserved by the exact solution, then it is also conserved by the discretization, up to quadrature errors, solver tolerances, and machine precision.
\end{theorem}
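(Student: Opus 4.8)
The plan is to extract, from the fully discrete scheme \eqref{eq:generalavcpgweak}, a discrete counterpart of the identity chain \eqref{eq:general_preservation}: the \emph{exact} change in $Q_q$ over a timestep $T_n$ should equal $\calI_n$ applied to $\tilde{F}$ tested against the $q$-th auxiliary variable, after which its sign is pinned down by property \eqref{eq:tildeFprop2} of $\tilde{F}$ together with the sign-preservation of $\calI_n$ in \eqref{eq:Isignpreserving}. Fix $q \in \{1,\dots,P\}$ and let $(u,(\tilde{w}_p))$ solve \eqref{eq:generalavcpgweak} on $T_n$.

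Since $u \in \bbX_n \subseteq \bbP_S(T_n;\bbU)$ is $C^1$ in time, the fundamental theorem of calculus (applied to $t \mapsto Q_q(u(t))$, differentiable by Fr\'echet-differentiability of $Q_q$) gives $Q_q(u(t_{n+1})) - Q_q(u(t_n)) = \int_{T_n} Q'_q(u;\dot{u})$. Now $\dot{u} \in \bbP_{S-1}(T_n;\bbU) = \dot{\bbX}_n$, so it is an admissible test function $v_q$ in the $q$-th equation of \eqref{eq:generalavcpgweakb}, which rewrites the right-hand side as $\calI_n[M(u;\dot{u},\tilde{w}_q)]$; and $\tilde{w}_q \in \dot{\bbX}_n$ is an admissible test function $v$ in \eqref{eq:generalavcpgweaka}, which rewrites this in turn as $\calI_n[\tilde{F}(u,(\tilde{w}_p);\tilde{w}_q)]$. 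Chaining,
\[
  Q_q(u(t_{n+1})) - Q_q(u(t_n)) = \int_{T_n} Q'_q(u;\dot{u}) = \calI_n[M(u;\dot{u},\tilde{w}_q)] = \calI_n[\tilde{F}(u,(\tilde{w}_p);\tilde{w}_q)].
\]
By hypothesis $Q_q$ is conserved or one-sidedly dissipated by the exact semi-discrete dynamics, i.e.\ $F(\cdot;w_q(\cdot))$ is identically zero, everywhere $\le 0$, or everywhere $\ge 0$; property \eqref{eq:tildeFprop2} then forces the time-dependent integrand $t \mapsto \tilde{F}(u(t),(\tilde{w}_p(t));\tilde{w}_q(t))$ to carry the matching sign pointwise on $T_n$. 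In the conserved case this integrand vanishes identically and so does $\calI_n$ of it; in the dissipative cases \eqref{eq:Isignpreserving} transfers the pointwise sign to $\calI_n[\tilde{F}(u,(\tilde{w}_p);\tilde{w}_q)]$, hence to $Q_q(u(t_{n+1})) - Q_q(u(t_n))$. Summing over timesteps yields the monotone (resp.\ constant) evolution of $Q_q$.

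I do not expect a genuine obstacle: the substantive effort was front-loaded into constructing $\tilde{F}$ with the three listed properties and into imposing \eqref{eq:Isignpreserving}--\eqref{eq:Iscaling} on $\calI_n$, and the argument above uses only the structure-preservation property \eqref{eq:tildeFprop2} (the consistency property \eqref{eq:tildeFprop1} plays no role here). Two points nonetheless need care. First, $\tilde{F}(u,(\tilde{w}_p);\tilde{w}_q)$ must be read as the scalar function $t \mapsto \tilde{F}(u(t),(\tilde{w}_p(t));\tilde{w}_q(t))$ obtained by evaluating $\tilde{F}$ pointwise on the time-polynomial arguments, and one should note that this function lies in a space --- polynomials in $t$, since $\tilde{F}$ is polynomial in its arguments in our examples --- on which the sign-preservation \eqref{eq:Isignpreserving} of the quadrature $\calI_n$ genuinely applies. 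Second, the qualifier ``up to quadrature errors, solver tolerances, and machine precision'' must be tracked: the identity $\int_{T_n}Q'_q(u;\dot{u}) = Q_q(u(t_{n+1})) - Q_q(u(t_n))$ and the step invoking \eqref{eq:generalavcpgweakb} are exact only when the exact integral on the right of \eqref{eq:generalavcpgweakb} is computed exactly, and in practice \eqref{eq:generalavcpgweak} is solved only to a finite nonlinear-solver tolerance in floating-point arithmetic; I would therefore phrase the conclusion as holding exactly in the idealized setting where these three error sources are absent, and carrying those controllable perturbations otherwise.
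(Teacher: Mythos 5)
Your proposal is correct and follows essentially the same route as the paper's proof: the identity chain $Q_q(u(t_{n+1})) - Q_q(u(t_n)) = \int_{T_n} Q'_q(u;\dot{u}) = \calI_n[M(u;\dot{u},\tilde{w}_q)] = \calI_n[\tilde{F}(u,(\tilde{w}_p);\tilde{w}_q)]$ obtained by testing \eqref{eq:generalavcpgweakb} with $\dot{u}$ and \eqref{eq:generalavcpgweaka} with $\tilde{w}_q$, followed by \eqref{eq:tildeFprop2} and the sign-preservation \eqref{eq:Isignpreserving} of $\calI_n$. Your additional remarks on the admissibility of $\dot{u}$ as a test function and on tracking the quadrature/solver/floating-point caveats are sensible elaborations of what the paper leaves implicit.
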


\begin{proof}
    For each quantity of interest $Q_q$,
    \begin{equation}
            Q_q(u(t_{n+1})) - Q_q(u(t_n))
        =  \int_{T_n}Q'_q(u; \dot{u})
        =  \calI_n[M(u; \dot{u}, \tilde{w}_q)]
        =  \calI_n[\tilde{F}(u, (\tilde{w}_p); \tilde{w}_q)],
    \end{equation}
    where the second equality holds by \eqref{eq:generalavcpgweakb}, and the final equality by \eqref{eq:generalavcpgweaka}.
    Thus, if $Q_q(u)$ is conserved by the exact solution, $\tilde{F}(u, (\tilde{w}_p); \tilde{w}_q) = 0$ by \eqref{eq:tildeFprop2}; by the linearity of $\calI_n$, $Q_q(u)$ is conserved across timesteps.
    Otherwise, if $Q_q(u)$ is non-decreasing for the exact solution, $\tilde{F}(u, (\tilde{w}_p); \tilde{w}_q) \ge 0$ by \eqref{eq:tildeFprop2}; by the sign-preserving property of $\calI_n$ \eqref{eq:Isignpreserving}, $Q_q(u)$ is non-decreasing across timesteps.
    The same argument holds if $Q_q(u)$ is non-increasing.
\end{proof}

For the scheme \eqref{eq:incompnsavcpg} for the incompressible Navier--Stokes equations, we find
\begin{subequations}
\begin{align}
    Q_1(\bfu(t_{n+1})) - Q_1(\bfu(t_n))  &=  - \frac{1}{\Re}\calI_n[\|\nabla\tilde{\bfu}\|^2]  \le  0,  \\
    Q_2(\bfu(t_{n+1})) - Q_2(\bfu(t_n))  &=  - \frac{1}{\Re}\calI_n[(\nabla\tilde{\bfu}, \nabla\tilde{\bfomega})].
\end{align}
\end{subequations}
These identities resemble weak forms of (\ref{eq:incompnske},~\ref{eq:incompnshel}).

\subsection{Numerical example: Hill spherical vortex}

To demonstrate these results, we consider a stationary Hill spherical vortex \cite{Hill_1894} with swirling motion \cite[Sec.~6(b)]{Moffatt_1969}.
In spherical coordinates $(r, \theta, \varphi)$, define the Stokes stream function
\begin{equation}
    \psi(r, \theta, \phi)  \coloneqq  \!\left\{\begin{aligned}
        &2\!\left(\frac{J_{\frac{3}{2}}(4\eta r)}{\!\left(4r\right)\!^{\frac{3}{2}}} - J_{\frac{3}{2}}(\eta)\!\right)\!(r\sin\theta)^2,  &  r &\le \frac{1}{4},  \\
        &0,  &  r &> \frac{1}{4},
    \end{aligned}\right.
\end{equation}
where $J_\alpha$ denotes the Bessel function of the first kind of order $\alpha$, and $\eta$ the first root of $J_{\frac{5}{2}}$, around $5.76$.
Up to projection onto $\bbU$, the initial conditions $\bfu(0)$ are given by $\psi$ as
\begin{equation}
    \bfu(0)  =  \frac{\partial_\theta \psi}{r^2\sin\theta}\hat{\bfr} - \frac{\partial_r \psi}{r\sin\theta}\hat{\bftheta} + \frac{4\eta\psi}{r\sin\theta}\hat{\bfvarphi}
\end{equation}
where $(\hat{\bfr}, \hat{\bftheta}, \hat{\bfvarphi})$ are the corresponding spherical unit vectors.
This defines the first stationary Hill spherical vortex of radius $\frac{1}{4}$; we consider the domain $\Omega = (-0.5, 0.5)^3$.

In space, we take $(\bbU, \bbP)$ to form the lowest order Taylor--Hood finite-element (FE) pair \cite[Sec.~54.3]{Ern_Guermond_2021b}, continuous Lagrange FEs \cite[Sec.~6~\&~7]{Ern_Guermond_2021a} of orders 2 and 1 respectively; we use tetrahedral cells of uniform diameter $2^{-3}$.
In time, we take $S = 3$, with $\calI_n$ the exact integral, a uniform timestep $\Delta t = 2^{-10}$ and duration $3\cdot2^{-6}$.
For comparison, we run simulations using the full structure-preserving scheme \eqref{eq:incompnsavcpg} alongside one that preserves the structure in the energy only using $\tilde{F}$ as defined in \eqref{eq:incompnsintermediateF}.
We vary $\Re \in 2^{2s}$ over $s \in \{0, \dots, 8\}$.

Fig.~\ref{fig:hill_invariants} shows the evolution of the energy $Q_1$ and helicity $Q_2$ in the two simulations.
From the lower-right graph, we observe that the solely energy-preserving scheme has an artificial dissipation in the helicity at all $\Re$, due to the lack of preservation of the update law for helicity.
In all other cases, the dissipations in the energy and helicity decrease in magnitude as $\Re$ increases; moreover the energy is universally non-increasing.
Fig.~\ref{fig:hill_plots} shows a cross-section of the velocity streamlines at the initial and final times with both schemes, at $\Re = 2^{16}$.
When compared with the results from the full structure-preserving scheme, one can observe that the artificial helicity dissipation in the energy-preserving scheme causes increased unphysical instability in the vortex.

\begin{figure}[!ht]
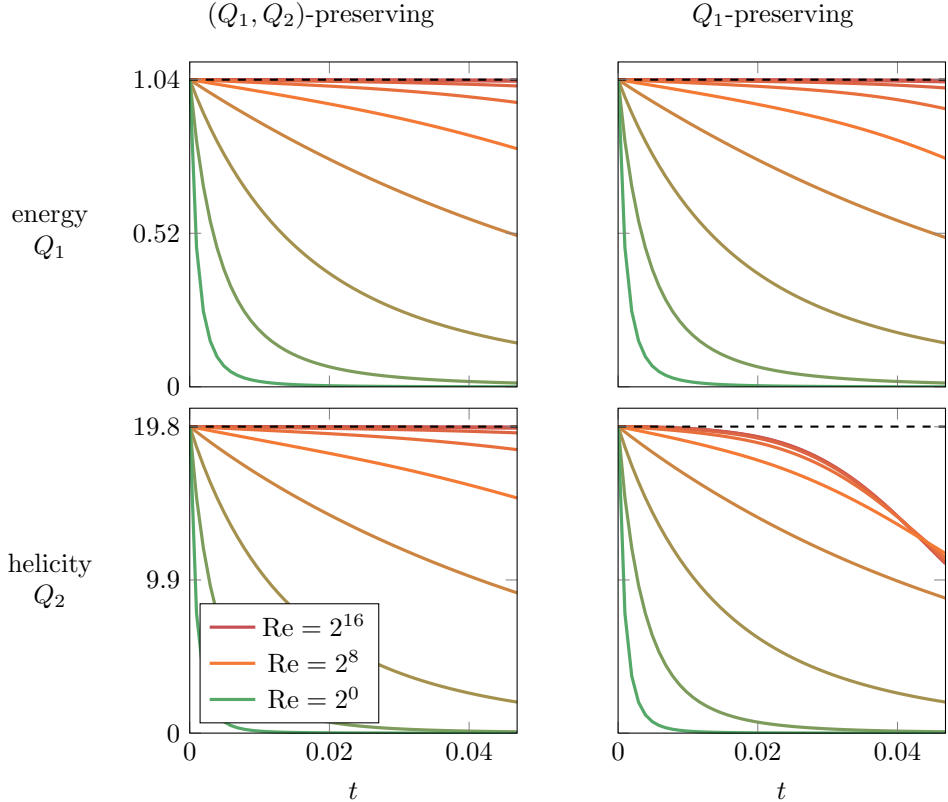

    \captionsetup[subfigure]{justification = centering}
    \centering

    \begin{subfigure}{0.09\textwidth}
        \centering

        $\,$
    \end{subfigure}%
    \begin{subfigure}{0.455\textwidth}
        \centering

        $(Q_1, Q_2)$-preserving
    \end{subfigure}%
    \begin{subfigure}{0.455\textwidth}
        \centering

        $Q_1$-preserving
    \end{subfigure}

    \vspace{4mm}

    \begin{subfigure}{0.09\textwidth}
        \centering

        energy \\ $Q_1$

        \raisebox{19mm}{}
    \end{subfigure}%
    \begin{subfigure}{0.455\textwidth}
        \centering
        
        \begin{tikzpicture}
        \begin{axis}[
            xmin = 0,  xmax = 0.046875,  xticklabel = \empty,  scaled x ticks = false,
            ymin = 0,  ymax = 1.1,       ytick = {0, 0.52, 1.04},  yticklabels = {$0$, $0.52$, $1.04$},
            width = 0.99\textwidth,  height = 0.99\textwidth,
            axis on top,
            legend pos = south west,
        ]
            \input{plots/hill_invariants/energy/andrews_farrell/16.tex}
            \input{plots/hill_invariants/energy/andrews_farrell/14.tex}
            \input{plots/hill_invariants/energy/andrews_farrell/12.tex}
            \input{plots/hill_invariants/energy/andrews_farrell/10.tex}
            \input{plots/hill_invariants/energy/andrews_farrell/8.tex}
            \input{plots/hill_invariants/energy/andrews_farrell/6.tex}
            \input{plots/hill_invariants/energy/andrews_farrell/4.tex}
            \input{plots/hill_invariants/energy/andrews_farrell/2.tex}
            \input{plots/hill_invariants/energy/andrews_farrell/0.tex}
            \input{plots/hill_invariants/energy/constant.tex}
        \end{axis}
        \end{tikzpicture}
    \end{subfigure}%
    \begin{subfigure}{0.455\textwidth}
        \centering
        
        \begin{tikzpicture}
        \begin{axis}[
            xmin = 0,  xmax = 0.046875,  xticklabel = \empty,  scaled x ticks = false,
            ymin = 0,  ymax = 1.1,       ytick = {0, 0.52, 1.04},  yticklabel = \empty,
            width = 0.99\textwidth,  height = 0.99\textwidth,
            axis on top,
            legend pos = south west,
        ]
            \input{plots/hill_invariants/energy/cpg/16.tex}
            \input{plots/hill_invariants/energy/cpg/14.tex}
            \input{plots/hill_invariants/energy/cpg/12.tex}
            \input{plots/hill_invariants/energy/cpg/10.tex}
            \input{plots/hill_invariants/energy/cpg/8.tex}
            \input{plots/hill_invariants/energy/cpg/6.tex}
            \input{plots/hill_invariants/energy/cpg/4.tex}
            \input{plots/hill_invariants/energy/cpg/2.tex}
            \input{plots/hill_invariants/energy/cpg/0.tex}
            \input{plots/hill_invariants/energy/constant.tex}
        \end{axis}
        \end{tikzpicture}
    \end{subfigure}

    \begin{subfigure}{0.09\textwidth}
        \centering

        helicity \\ $Q_2$

        \raisebox{26mm}{}
    \end{subfigure}%
    \begin{subfigure}{0.455\textwidth}
        \centering
        
        \begin{tikzpicture}
        \begin{axis}[
            xmin = 0,  xmax = 0.046875,  xtick = {0, 0.02, 0.04},  xticklabels = {$0$, $0.02$, $0.04$},  scaled x ticks = false,  xlabel = {$t$},
            ymin = 0,  ymax = 21,        ytick = {0, 9.9, 19.8},   yticklabels = {$0$, $9.9$, $19.8$},
            width = 0.99\textwidth,  height = 0.99\textwidth,
            axis on top,
            legend pos = south west,
        ]
            \input{plots/hill_invariants/helicity/andrews_farrell/16.tex}
            \input{plots/hill_invariants/helicity/andrews_farrell/14.tex}
            \input{plots/hill_invariants/helicity/andrews_farrell/12.tex}
            \input{plots/hill_invariants/helicity/andrews_farrell/10.tex}
            \input{plots/hill_invariants/helicity/andrews_farrell/8.tex}
            \input{plots/hill_invariants/helicity/andrews_farrell/6.tex}
            \input{plots/hill_invariants/helicity/andrews_farrell/4.tex}
            \input{plots/hill_invariants/helicity/andrews_farrell/2.tex}
            \input{plots/hill_invariants/helicity/andrews_farrell/0.tex}
            \input{plots/hill_invariants/helicity/constant.tex}
            \legend{$\Re = 2^{16}$, $\Re = 2^8$, $\Re = 2^0$}
        \end{axis}
        \end{tikzpicture}
    \end{subfigure}%
    \begin{subfigure}{0.455\textwidth}
        \centering
        
        \begin{tikzpicture}
        \begin{axis}[
            xmin = 0,  xmax = 0.046875,  xtick = {0, 0.02, 0.04},  xticklabels = {$0$, $0.02$, $0.04$},  scaled x ticks = false,  xlabel = {$t$},
            ymin = 0,  ymax = 21,        ytick = {0, 9.9, 19.8},  yticklabel = \empty,
            width = 0.99\textwidth,  height = 0.99\textwidth,
            axis on top,
            legend pos = south west,
        ]
            \input{plots/hill_invariants/helicity/cpg/16.tex}
            \input{plots/hill_invariants/helicity/cpg/14.tex}
            \input{plots/hill_invariants/helicity/cpg/12.tex}
            \input{plots/hill_invariants/helicity/cpg/10.tex}
            \input{plots/hill_invariants/helicity/cpg/8.tex}
            \input{plots/hill_invariants/helicity/cpg/6.tex}
            \input{plots/hill_invariants/helicity/cpg/4.tex}
            \input{plots/hill_invariants/helicity/cpg/2.tex}
            \input{plots/hill_invariants/helicity/cpg/0.tex}
            \input{plots/hill_invariants/helicity/constant.tex}
        \end{axis}
        \end{tikzpicture}
    \end{subfigure}

    \caption{Evolution of the energy $Q_1$ and helicity $Q_2$ in the $(Q_1, Q_2)$-preserving scheme \eqref{eq:incompnsavcpg} and the $Q_1$-preserving scheme derived from \eqref{eq:incompnsintermediateF}, with varying $\Re = 2^{2s}$ for $s \in \{0, \dots, 8\}$.}

    \label{fig:hill_invariants}
\end{figure}

\begin{figure}
    \captionsetup[subfigure]{justification = centering}
    \centering

    \begin{subfigure}{0.5\textwidth}
        \centering

        \includegraphics[width=\textwidth]{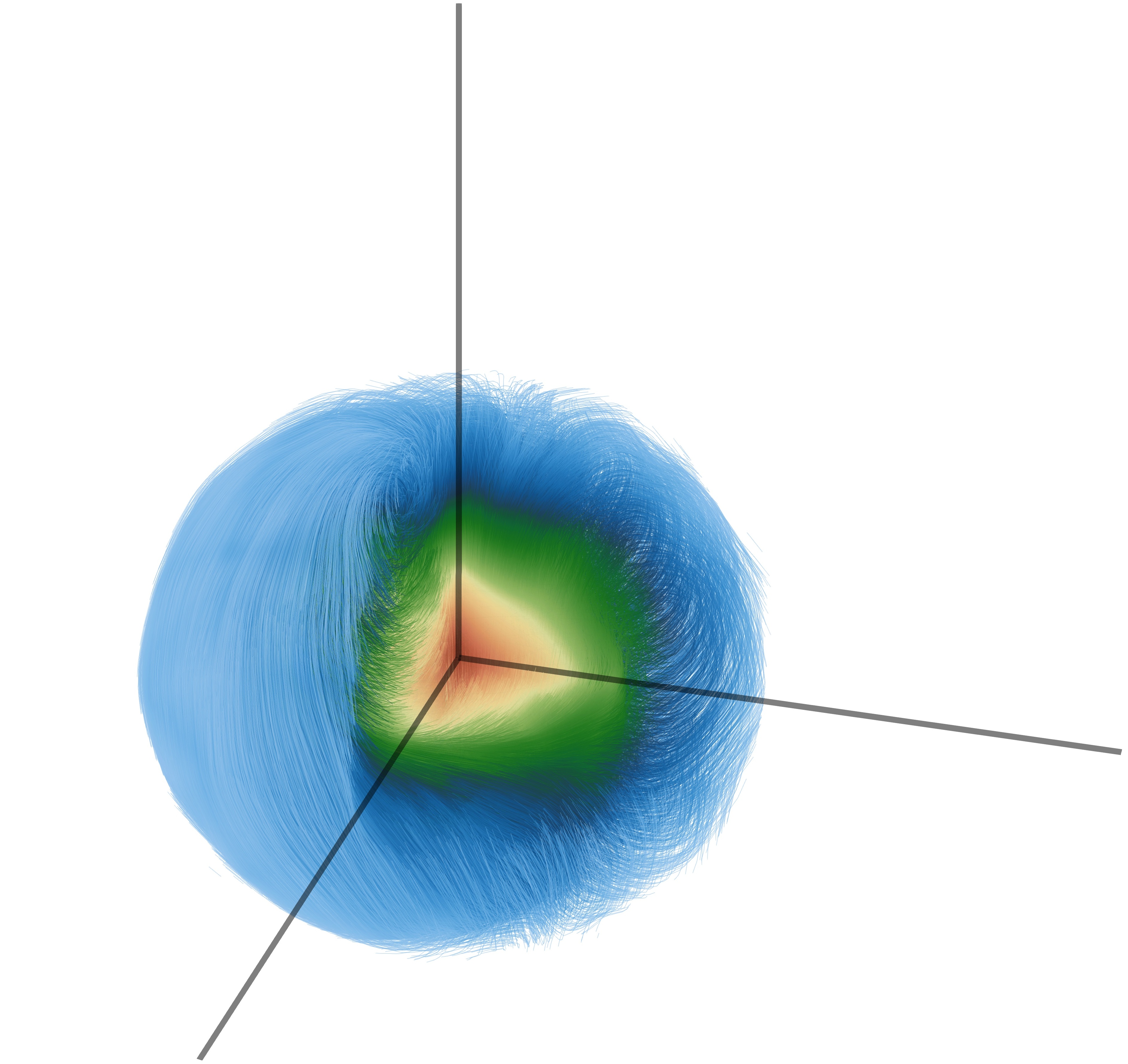}

        \caption{$t = 0$}
    \end{subfigure}

    \vspace{0mm}

    \begin{subfigure}{0.5\textwidth}
        \centering

        \includegraphics[width=\textwidth]{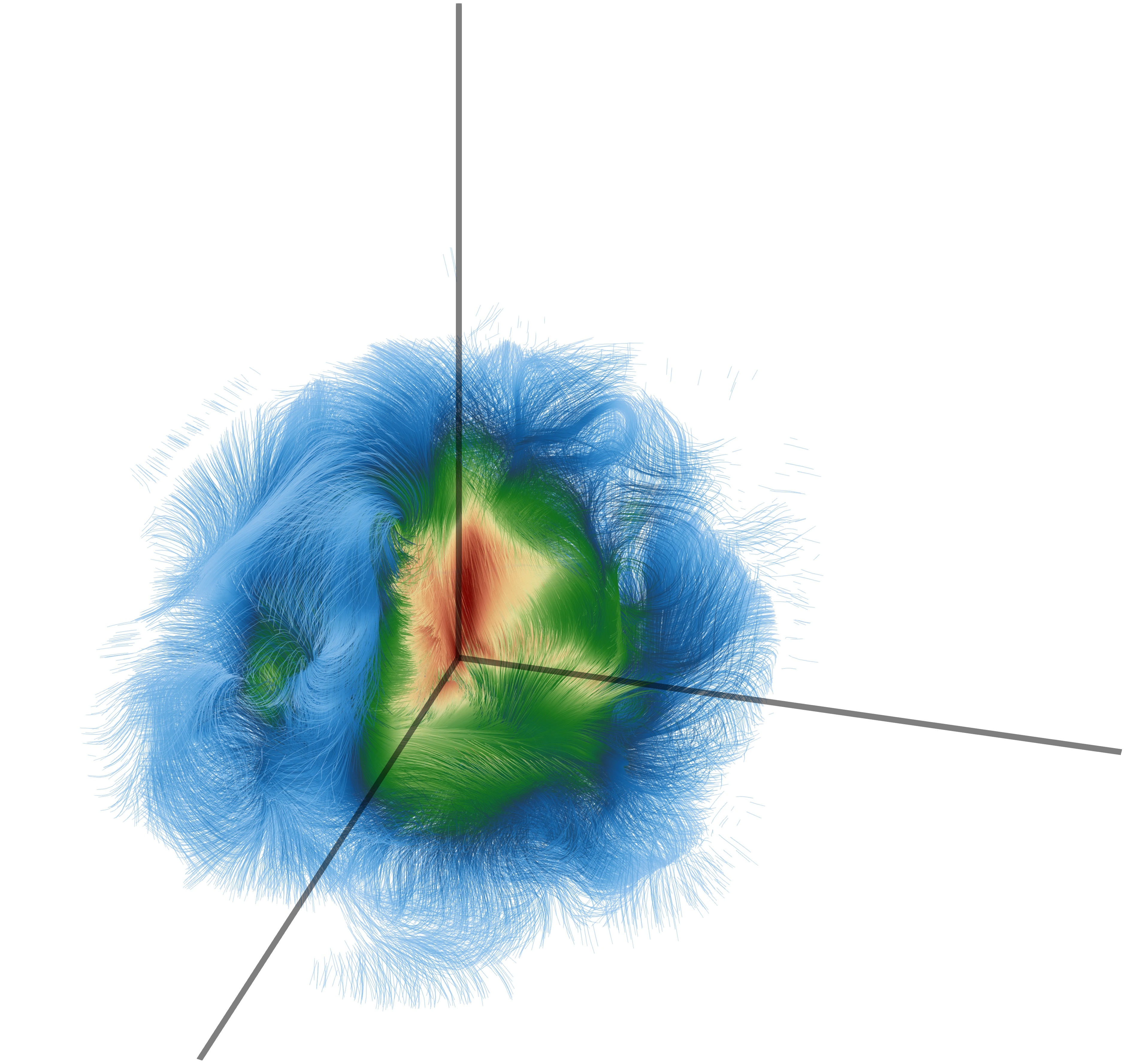}

        \caption{$(Q_1, Q_2)$-preserving, $t = 3 \cdot 2^{-6}$}
    \end{subfigure}%
    \begin{subfigure}{0.5\textwidth}
        \centering

        \includegraphics[width=\textwidth]{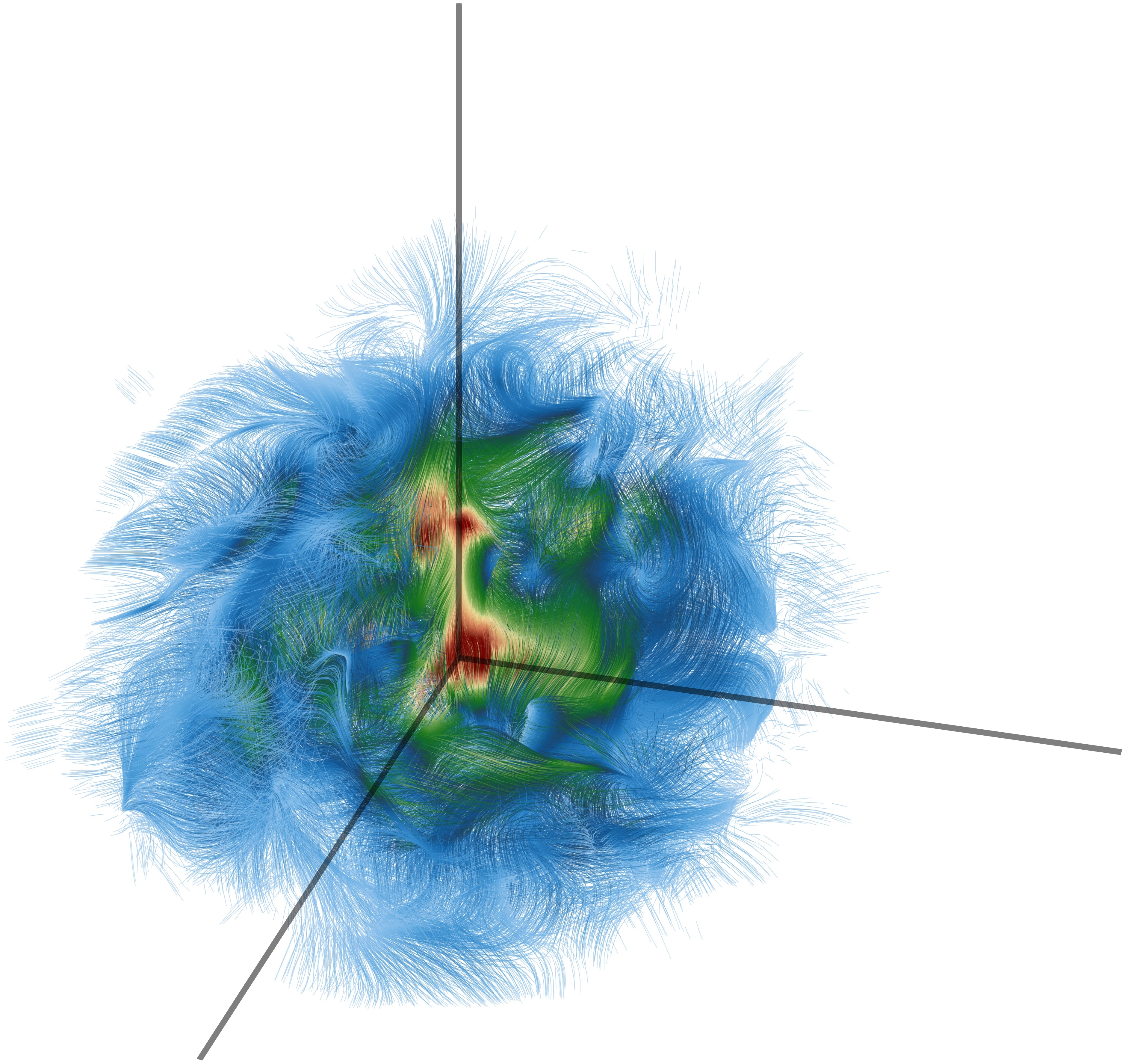}

        \caption{$Q_1$-preserving, $t = 3 \cdot 2^{-6}$}
    \end{subfigure}

    \caption{Cross-sections of streamlines of the velocity $\bfu$ for the Hill vortex at times $t \in \{0, 3 \cdot 2^{-6}\}$ in the $(Q_1, Q_2)$-preserving scheme \eqref{eq:incompnsavcpg} and the $Q_1$-preserving scheme derived from \eqref{eq:incompnsintermediateF} with $\Re = 2^{16}$. Coloring indicates $\|\bfu\|$.}

    \label{fig:hill_plots}
\end{figure}

\section{Compressible Navier--Stokes} \label{sec:compressible}
We now consider structure-preserving schemes for the compressible Navier--Stokes equations.
We seek a scheme that will conserve the mass, momentum and energy, and preserve the behaviour of the entropy;
specifically, we would like the entropy to be conserved in the ideal limit, and non-decreasing otherwise, i.e.~the scheme should obey the second law of thermodynamics.\footnote{
    \added{Note, the conservation of entropy in the ideal limit may not always be appropriate.
    This is in particular true of less regular solutions, such as vanishing-viscosity limits with shocks.}
}
The non-dissipation of entropy is a crucial aspect in the analysis and behaviour of solutions to the compressible Navier--Stokes equations \cite{Feireisl_et_al_2021} with quantitative implications on the regularity of solutions and qualitative implications on the dissipation rate; it is therefore highly desirable to preserve it.

Over a bounded Lipschitz polyhedral domain $\Omega \subset \bbR^d$, $d \in \{1, 2, 3\}$, the compressible Navier--Stokes equations can be written in the following non-dimensionalized form:
\begin{subequations}\label{eq:compns1}
\begin{align}
           \dot{\rho}  &=  - \div[\rho\bfu],  \\
       \rho\dot{\bfu}  &=  - \rho\bfu\cdot\nabla\bfu - \nabla p + \div\!\left[\frac{2}{\Re}\rho\tau[\bfu]\right]\!,  \\
    \dot{\varepsilon}  &=  - \div[\varepsilon\bfu] - p\div\bfu + \frac{2}{\Re}\upsilon[\bfu, \bfu] + \div\!\left[\frac{1}{\Re\Pr}\rho\nabla\theta\right]\!.
\end{align}
\end{subequations}
Here, $\rho$, $p$, $\bfu$, $\varepsilon$ and $\theta$ are the density, pressure, velocity, internal energy density and temperature respectively, $\Re > 0$ and $\Pr > 0$ are the Reynolds and Prandtl numbers, potentially functions of $\rho$ and $\varepsilon$, the deviatoric strain $\tau : \bbR^d \to \bbR^d_{\rm sym}$ is defined
\begin{subequations}
\begin{equation}
    \tau[\bfu]  \coloneqq  \frac{1}{2}\nabla\bfu + \frac{1}{2}\nabla\bfu^\top - \frac{1}{3}(\div\bfu)I,
\end{equation}
trace-free when $d=3$, and the positive semi-definite bilinear form $\upsilon : \bbR^d \times \bbR^d \to \bbR$ is defined
\begin{equation}
    \upsilon[\bfu, \bfv]  \coloneqq  \!\left(\frac{1}{2}\nabla\bfu + \frac{1}{2}\nabla\bfu^\top\right)\!:\!\left(\frac{1}{2}\nabla\bfv + \frac{1}{2}\nabla\bfv^\top\right)\! - \frac{1}{3}(\div\bfu)(\div\bfv).
\end{equation}
\end{subequations}
We assume the Stokes hypothesis, that the bulk viscosity is zero \cite{Stokes_1845}. This is for brevity only and is not necessary; the ideas we present in this section readily extend to more complex stress tensors.
For simplicity, we assume periodic BCs.

The system \eqref{eq:compns1} is completed by constitutive relations relating two of $\rho$, $\theta$, $p$, $\varepsilon$ to the others.
Our discretization is independent of these, but for concreteness we will employ as running example an ideal gas, with constitutive relations
\begin{equation}\label{eq:ideal_state}
              p  =  \rho\theta,  \quad
    \varepsilon  =  C_Vp,
\end{equation}
where $C_V$ is the non-dimensionalized heat capacity at constant volume, ${3}/{2}$ for a monatomic gas.

We also define the inverse temperature $\beta \coloneqq {\theta^{-1}}$,
and the specific entropy $s$, corresponding to the total entropy $\int_\Omega \rho s$, satisfying the (intensive) fundamental thermodynamic relation
\begin{subequations}
\begin{equation}\label{eq:ftr1}
    \beta \rmd\varepsilon  =  \rmd[\rho s] - g \rmd\rho.
\end{equation}
Here, $g = s - (\varepsilon + p)\beta/\rho$ is the negation of the specific free energy, or Gibbs free energy per unit mass, per unit temperature.
Taking differentials gives the second thermodynamic relation,
\begin{equation}\label{eq:ftr2}
    \rho \rmd g + \varepsilon \rmd\beta + \rmd[p\beta]  =  0.
\end{equation}
\end{subequations}
For an ideal gas, $s$ and $g$ evaluate as
\begin{equation}\label{eq:ideal_gibbs}
    s  =  \log\!\left(\frac{\theta^{C_V}}{\rho}\right)\!,  \quad
    g  =  s - (C_V + 1).
\end{equation}

\subsection{Definition of semi-discrete form}
To define the semi-discrete form, we must first choose a convenient parametrization.
Many options are available here, such as primitive or conservative variables.
We shall choose $\sigma = \rho^{\frac{1}{2}}$, $\bfmu = \rho^{\frac{1}{2}}\bfu$, and $\zeta = \log(\varepsilon)$.
This parametrization is chosen with some hindsight.
The choice of $\bfmu$ ensures the energy is independent of the density, limiting the number of auxiliary variables that must be introduced; the choice of $\sigma$ balances this in a way that later simplifies the conservation of momentum; the choice of $\zeta$ ensures the internal energy remains positive.
Writing \eqref{eq:compns1} in terms of $\sigma$, $\bfmu$, $\zeta$ yields
\begin{subequations}\label{eq:compns2}
\begin{align}
    (\dot{\rho} =)         \quad     2\sigma\dot{\sigma}  &=  - \div[\rho\bfu],  \\
                                       \sigma\dot{\bfmu}  &=  - \frac{1}{2}(\rho\bfu\cdot\nabla\bfu + \div[\rho\bfu^{\otimes 2}]) - \nabla p + \div\!\left[\frac{2}{\Re}\rho\tau[\bfu]\right]\!,  \label{eq:compns2b}  \\
    (\dot{\varepsilon} =)  \quad  \varepsilon\dot{\zeta}  &=  - \div[\varepsilon\bfu] - p\div\bfu + \frac{2}{\Re}\rho\upsilon[\bfu, \bfu] + \div\!\left[\frac{1}{\Re\Pr}\rho\nabla\theta\right]\!,
\end{align}
\end{subequations}
where $\rho = \sigma^2$, $\bfu = \sigma^{-1}\bfmu$, $\varepsilon = \exp(\zeta)$, and it is assumed that known constitutive relations determine $p$, $\theta$ as functions of $\rho$, $\varepsilon$.
For some continuous, spatially periodic FE space $\bbV \subset C^0_{\text{per}}(\Omega)$, we define the mixed FE space $\bbU \coloneqq \bbV^{1+d+1}$; we use the same space for each variable both for simplicity, and as it will help in ensuring momentum conservation\footnote{Discontinuous spaces $\bbV \not\subset C^0(\Omega)$ are often preferred for discretization. This necessitates the introduction of facet and penalty terms to handle the non-conformity; such an extension is possible, but omitted here for brevity.}.
We may then define a semi-discrete variational problem: find $(\sigma, \bfmu, \zeta) \in \bbX$, for $\bbX$ defined as in \eqref{eq:affinespace}, such that
\begin{equation}\label{eq:compns_semidisc}
      M((\sigma, \zeta); (\dot{\sigma}, \dot{\bfmu}, \dot{\zeta}), (v_\rho, \bfv_m, v_\varepsilon))
    = F((\sigma, \bfmu, \zeta); (v_\rho, \bfv_m, v_\varepsilon))
\end{equation}
at all times $t \in \bbR_+$ and for all $(v_\rho, \bfv_m, v_\varepsilon) \in \bbU$, where $M$, $F$ are defined
\begin{subequations}
\begin{align}
                M
    &\coloneqq  \int_\Omega 2\sigma\dot{\sigma}v_\rho + \int_\Omega \sigma\dot{\bfmu}\cdot\bfv_m + \int_\Omega \varepsilon\dot{\zeta}v_\varepsilon,  \\
                F
    &\coloneqq  \int_\Omega \rho\bfu\cdot\nabla v_\rho  \\
    &\qquad     + \int_\Omega \!\left[\frac{1}{2}\rho\bfu\cdot(\nabla \bfv_m\cdot\bfu - \nabla \bfu\cdot\bfv_m) - \bfv_m\cdot\nabla p - \frac{2}{\Re}\rho\upsilon[\bfu, \bfv_m]\right]\!  \nonumber   \\
    &\qquad     + \int_\Omega \!\left[\bfu\cdot(\varepsilon\nabla v_\varepsilon + \nabla[pv_\varepsilon]) + \frac{2}{\Re}\rho\upsilon[\bfu, \bfu]v_\varepsilon - \frac{1}{\Re\Pr}\rho\nabla\theta\cdot\nabla v_\varepsilon\right]\!. \nonumber
\end{align}
\end{subequations}

In particular, for an ideal gas, writing the equations of state \eqref{eq:ideal_state} in terms of 
$\rho = \sigma^2$ and $\varepsilon = \exp(\zeta)$ yields
\begin{equation}
    p       =  \frac{\varepsilon}{C_V} = \frac{\exp{(\zeta)}}{C_V},  \quad
    \theta  =  \frac{p}{\rho} = \frac{p}{\sigma^2}.
\end{equation}

\subsection{Definition of timestepping scheme}
Over the timestep $T_n$, choosing $\calI_n$ to be the exact integral casts \eqref{eq:compns_semidisc} into a fully discrete form (i.e.~a \added{continuous Petrov--Galerkin} \deleted{CPG} discretization): find $(\sigma, \bfmu, \zeta) \in \bbX_n$ such that
\begin{equation}\label{eq:compns_disc}
      \int_{T_n}M((\sigma, \zeta); (\dot{\sigma}, \dot{\bfmu}, \dot{\zeta}), (v_\rho, \bfv_m, v_\varepsilon))
    = \int_{T_n}F((\sigma, \bfmu, \varepsilon); (v_\rho, \bfv_m, v_\varepsilon)),
\end{equation}
for all $(v_\rho, \bfv_m, v_\varepsilon) \in \dot{\bbX}_n$, with $\bbX_n$ defined as in \eqref{eq:solution_space}.
For simplicity, we assume that a sufficiently small timestep and fine mesh are chosen so that $\sigma$ remains positive, implying the constitutive relations remain well-defined.

\subsection{Identification of associated test functions}
Including each component of the momentum, we have $3+d$ quantities of interest,
\begin{equation}
    Q_1     \coloneqq  \int_\Omega \sigma^2,  \quad
    \bfQ_2  \coloneqq  \int_\Omega \sigma\bfmu,  \quad
    Q_3     \coloneqq  \int_\Omega \!\left[\frac{1}{2}\|\bfmu\|^2 + \varepsilon\right]\!,  \quad
    Q_4     \coloneqq  \int_\Omega \rho s,
\end{equation}
the mass, momentum, energy, and entropy respectively, where $s$ is a function of $\rho = \sigma^2$ and $\varepsilon = \exp(\zeta)$.
By evaluating the Fr\'echet derivatives, we identify these with the respective associated test functions
\begin{equation}\label{eq:compns_atf}
    (1, \bfzero, 0),  \quad
    (\frac{1}{2}\bfu, I, 0),  \quad
    (0, \bfu, 1),  \quad
    (g, \bfzero, \beta),
\end{equation}
where again $\beta = {\theta^{-1}}$.
Since $\bfQ_2$ contains $d$ quantities of interest, one can more precisely state that the associated test functions for each component $Q_{2i}$ of the momentum are $(\frac{1}{2}u_i, \bfe_i, 0)$. 
The associated test functions for $Q_4$ are found from \eqref{eq:ftr1}.

\subsection{Introduction of auxiliary variables}
We introduce auxiliary variables for each of the associated test functions in \eqref{eq:compns_atf}, according to \eqref{eq:av_definition}.
It is straightforward to show that, provided $1 \in \bbV$, the auxiliary variables that would be introduced to approximate $0$, $1$, $I$ are precisely $0$, $1$, $I$; it is therefore unnecessary to introduce them.

The remaining associated test functions include two for $\bfu$, and one each for $g$, $\beta$.
The variational relations \eqref{eq:av_definition} satisfied by each of the auxiliary variables for $\bfu$ are identical, and so these two auxiliary variables are identical.
This leaves three auxiliary variables, $(\tilde{g}, \tilde{\bfu}, \tilde{\beta}) \in \dot{\bbX}_n$, satisfying
\begin{equation}\label{eq:compns_avs}
      \int_{T_n}M((\sigma, \zeta); (v_g, \bfv_u, v_\beta), (\tilde{g}, \tilde{\bfu}, \tilde{\beta}))
    = \int_{T_n}M((\sigma, \zeta); (v_g, \bfv_u, v_\beta), (g, \bfu, \beta)),
\end{equation}
for all $(v_g, \bfv_u, v_\beta) \in \dot{\bbX}_n$, where again $\beta = {\theta}^{-1}$, $g$ are functions of $\rho = \sigma^2$, $\varepsilon = \exp(\zeta)$, and $\bfu = {\sigma}^{-1}\bfmu$.
Like $\beta$, we assume that $\tilde{\beta} > 0$.

For an ideal gas, from \eqref{eq:ideal_gibbs} the negative specific free energy per unit temperature $g$ can be defined in terms of $\rho = \sigma^2$, $\varepsilon = \exp(\zeta)$ as
\begin{equation}
    g  =  \log\!\left(\frac{\varepsilon^{C_V}}{\rho^{C_V+1}}\right)\! - (C_V + 1 + C_V\log C_V).
\end{equation}

\subsection{Modification of right-hand side}
We now introduce $\tilde{g}$, $\tilde{\beta}$ into $F$.
Just as we consider $\rho$, $p$, and $\varepsilon$ to be functions of $\sigma$ and $\zeta$,
let $\tilde{\rho}$, $\tilde{p}$, and $\tilde{\varepsilon}$ denote an auxiliary density, pressure and energy density, determined by the fluid's constitutive relations as functions of the auxiliary inverse temperature $\tilde{\beta}$ and auxiliary negative specific free energy per unit temperature $\tilde{g}$.
Crucially, in this sense $\tilde{\rho}(\tilde{g}, \tilde{\beta})$ differs from $\rho = \sigma^2$, and $\tilde{\varepsilon}(\tilde{g}, \tilde{\beta})$ from $\varepsilon = \exp(\zeta)$.
By inspection, we define $\tilde{F}((\sigma, \bfmu, \zeta), (\tilde{g}, \tilde{\bfu}, \tilde{\beta}); (v_\rho, \bfv_m, v_u))$ to be
\begin{align}
                \tilde{F}
    &\coloneqq  \int_\Omega \tilde{\rho}\tilde{\bfu}\cdot\nabla v_\rho  \\
    &\qquad     + \int_\Omega \!\left[\frac{1}{2}\tilde{\rho}\tilde{\bfu}\cdot(\nabla \bfv_m \cdot \tilde{\bfu} - \nabla \tilde{\bfu} \cdot \bfv_m) - \bfv_m\cdot\nabla\tilde{p} - \frac{2}{\Re}\rho\upsilon[\tilde{\bfu}, \bfv_m]\right]\!  \notag   \\
    &\qquad     + \int_\Omega \!\left[\tilde{\bfu}\cdot(\tilde{\varepsilon}\nabla v_\varepsilon + \nabla[\tilde{p}v_\varepsilon]) + \frac{2}{\Re}\rho\upsilon[\tilde{\bfu}, \tilde{\bfu}]v_\varepsilon + \frac{1}{\Re\Pr}\rho\theta^2\nabla\tilde{\beta}\cdot\nabla v_\varepsilon\right]\!.  \notag
\end{align}
Substituting $(v_\rho, \bfv_m, v_\varepsilon)$ for each set of auxiliary variables for each quantity of interest,
\begin{subequations}
\begin{align}
    \tilde{F}(\dots; (1, \bfzero, 0))                      &=  0,  \\
    \tilde{F}(\dots; (\frac{1}{2}\tilde{\bfu}, I, 0))      &=  0,  \\
    \tilde{F}(\dots; (0, \tilde{\bfu}, 1))                 &=  0,  \\
    \tilde{F}(\dots; (\tilde{g}, \bfzero, \tilde{\beta}))  &=  \frac{1}{\Re}\int_{\Omega}\rho\!\left(\tilde{\beta}\upsilon[\tilde{\bfu}, \tilde{\bfu}] + \frac{1}{\Pr}\theta^2\|\nabla\tilde{\beta}\|^2\right)\!  \ge  0.  \label{eq:ftilde_q4}
\end{align}
\end{subequations}
These identities are immediate by evaluation of the left-hand side.
The evaluation of \eqref{eq:ftilde_q4} includes the integral
\begin{equation}\label{eq:awkward_integral}
    \int_\Omega \tilde{\bfu}\cdot\!\left(\tilde{\rho}\nabla\tilde{g} + \tilde{\varepsilon}\nabla\tilde{\beta} + \nabla[\tilde{p}\tilde{\beta}]\right)\!.
\end{equation}
Any set of intensive thermodynamic quantities satisfying a valid constitutive law must satisfy the thermodynamic relation \eqref{eq:ftr2}.
As $(\tilde{\rho}, \tilde{g}, \tilde{\varepsilon}, \tilde{\beta}, \tilde{p})$ are constructed to satisfy such a law, we see $\tilde{\rho}\nabla\tilde{g} + \tilde{\varepsilon}\nabla\tilde{\beta} + \nabla[\tilde{p}\tilde{\beta}] = \bfzero$ everywhere, and \eqref{eq:awkward_integral} must evaluate to zero.

In particular, for an ideal gas, the auxiliary $\tilde{\rho}$, $\tilde{p}$, $\tilde{\varepsilon}$ can be written explicitly in terms of $\tilde{g}$, $\tilde{\beta}$ as
\begin{equation}
    \tilde{\rho}         =  \tilde{\beta}^{- C_V}\exp\!\left(- \tilde{g} - (C_V + 1)\right)\!,  \quad
    \tilde{p}            =  \frac{\tilde{\rho}}{\tilde{\beta}},  \quad
    \tilde{\varepsilon}  =  C_V\tilde{p}.
\end{equation}

\subsection{Construction of structure-preserving scheme}
    The final structure-preserving scheme is as follows: find $((\sigma, \bfmu, \zeta), (\tilde{g}, \tilde{\bfu}, \tilde{\beta})) \in \bbX_n \times \dot{\bbX}_n$ such that
    \begin{subequations}\label{eq:compns_avcpg}
    \begin{align}
        \int_{T_n}M((\sigma, \zeta); (\dot{\sigma}, \dot{\bfmu}, \dot{\zeta}), (v_\rho, \bfv_m, v_\varepsilon))
            &=  \int_{T_n}\tilde{F}((\sigma, \bfmu, \zeta), (\tilde{g}, \tilde{\bfu}, \tilde{\beta}); (v_\rho, \bfv_m, v_\varepsilon)),  \\
        \int_{T_n}M((\sigma, \zeta); (v_g, \bfv_u, v_\beta), (\tilde{g}, \tilde{\bfu}, \tilde{\beta}))
            &=  \int_{T_n}M((\sigma, \zeta); (v_g, \bfv_u, v_\beta), (g, \bfu, \beta)),
    \end{align}
    \end{subequations}
    for all $((v_\rho, \bfv_m, v_\varepsilon), (v_g, \bfv_u, v_\beta)) \in \dot{\bbX}_n \times \dot{\bbX}_n$, where $g$, $\beta$ are functions of $\rho = \sigma^2$, $\varepsilon = \exp(\zeta)$, and $\bfu = \frac{1}{\sigma}\bfmu$.
    Assuming a solution to \eqref{eq:compns_avcpg} exists, it necessarily exhibits all desired structure-preserving properties.

\subsection{Structure-preserving properties}

For the discrete scheme \eqref{eq:compns_avcpg} for the compressible Navier--Stokes equations, we find
\begin{subequations}
\begin{align}
    Q_1(\bfu(t_{n+1})) - Q_1(\bfu(t_n))  &=  0,  \\
    \bfQ_2(\bfu(t_{n+1})) - \bfQ_2(\bfu(t_n))  &=  \bfzero,  \\
    Q_3(\bfu(t_{n+1})) - Q_3(\bfu(t_n))  &=  0,  \\
    Q_4(\bfu(t_{n+1})) - Q_4(\bfu(t_n))  &=  - \frac{1}{\Re}\int_{T_n}\int_{\Omega}\rho\!\left(\tilde{\beta}\upsilon[\tilde{\bfu}, \tilde{\bfu}] + \frac{1}{\Pr}\theta^2\|\nabla\tilde{\beta}\|^2\right)\!  \ge  0.
\end{align}
\end{subequations}

\subsection{Numerical examples: Euler and supersonic test}

We consider two numerical examples: an inviscid Euler test to demonstrate entropy conservation, and a viscous test with a supersonic initial condition in velocity.
In both cases we take $C_V = 5/2$, typical for air at room temperature, and set $\Omega$ to be the unit square $(0,1)^2$.
For $\bbV$ we use piecewise linear continuous Lagrange FEs \cite[Sec.~6~\&~7]{Ern_Guermond_2021a}.
We compare the scheme \eqref{eq:compns_avcpg} at $S = 1$ (i.e.~at lowest order in time) with an implicit midpoint discretization of \eqref{eq:compns2}.

\subsubsection{Inviscid (Euler) test} \label{sec:inviscid_test}

We first consider an adiabatic (uniform $s$) perturbation in the state functions $\sigma$, $\zeta$, with $\Re = \infty$, i.e.~discarding viscous and thermally dissipative terms.
We take the initial conditions to be
\begin{subequations}
\begin{align}
    \sigma(0)  &=  \exp\!\left(\frac{1}{2}\sin(2\pi x)\sin(2\pi y)\right)\!,  \\
    \bfmu(0)   &=  \bfzero,  \\
    \zeta(0)   &=  \left(1 + \frac{1}{C_V}\right)\sin(2\pi x)\sin(2\pi y),
\end{align}
\end{subequations}
up to projection.
The approximation space $\bbU$ is defined over a uniform grid of triangular cells of width $2^{-5}$; we take a uniform timestep $\Delta t = 2^{-7}$.

With $\Re = \infty$, entropy should be conserved both in an exact solution, and in our conservative scheme \eqref{eq:compns_avcpg}.
Fig.~\ref{fig:inviscid_invariants} shows the error in the entropy for each simulation.
The lines terminate when the nonlinear solver fails to converge, potentially due to a solution to the scheme no longer existing;
we observe that \added{our proposed} \deleted{the SP} scheme fails after 515 timesteps, whereas the implicit midpoint scheme fails after 392.
Our scheme \eqref{eq:compns_avcpg} conserves entropy throughout (up to quadrature error, solver tolerances, and machine precision), whereas implicit midpoint does not, introducing spurious (nonphysical) entropy decrease.

\begin{figure}[!ht]
    \centering

    \begin{tikzpicture}
    \begin{axis}[
        xmin = 0,      xmax = 4.5,  xlabel = {time $t$},  xtick distance = 1,
        ymin = 1e-16,  ymax = 1e0,  ymode = log,
        width = \textwidth, height = 0.5\textwidth,
        axis on top,
        legend style = {at = {(0.97, 0.85)}, anchor = north east},
    ]

    \input{plots/inviscid_invariants/entropy/andrews_farrell.tex}
    \addlegendentry{our scheme}
    \input{plots/inviscid_invariants/entropy/implicit_midpoint.tex}
    \addlegendentry{implicit midpoint}
    \end{axis}
    \end{tikzpicture}

    \caption{Error in the entropy $|Q_4 - Q_4(0)|$ over time within the inviscid test (Section~\ref{sec:inviscid_test}) for implicit midpoint and our proposed scheme.}

    \label{fig:inviscid_invariants}
\end{figure}

\subsubsection{Supersonic test} \label{sec:supersonic_test}
We next consider a supersonic perturbation in the velocity field, with $\Pr = 0.71$, typical for air, and $\Re = 2^7$.
The initial conditions are
\begin{subequations}
\begin{align}
    \sigma(0)  &=  1,  \\
    \bfmu(0)   &=  2^3 \exp(\cos(2\pi x) + \cos(2\pi(y-0.5)) - 2) \bfe_1,  \\
    \zeta(0)   &=  0,
\end{align}
\end{subequations}
again up to projection.
The approximation space $\bbU$ is defined over a grid of square cells of uniform width $2^{-8}$; we take a uniform timestep $\Delta t = 2^{-11}$.

Fig.~\ref{fig:supersonic_plots} shows plots of the velocity, density, temperature, and specific entropy at various times in the structure-preserving scheme; the results from the implicit midpoint scheme exhibit very little visual difference.
The shockwave is clearly visible at the final time.
We use continuous approximations to all variables, causing oscillations in $\rho$ and $s$; this could be improved with a non-conforming discontinuous Galerkin spatial discretization.
Fig.~\ref{fig:supersonic_invariants} shows the error in the mass, momentum, and energy for each simulation.
Each is conserved (up to quadrature error, solver tolerances and machine precision) for the scheme \eqref{eq:compns_avcpg}, while only the mass is conserved for the implicit midpoint scheme (or any higher-order Gauss method).
The error in the energy increases exponentially in the implicit midpoint scheme from the point of formation of the shockwave, rising from a value of around $4.046$ to around $4.059$.

\begin{figure}[!ht]
    \captionsetup[subfigure]{justification = centering}
    \centering

    \begin{subfigure}{0.04\textwidth}
        \centering

        $\bfu$

        \vspace{17.5mm}
    \end{subfigure}%
    \begin{subfigure}{0.30\textwidth}
        \centering

        \includegraphics[width = 0.9\textwidth]{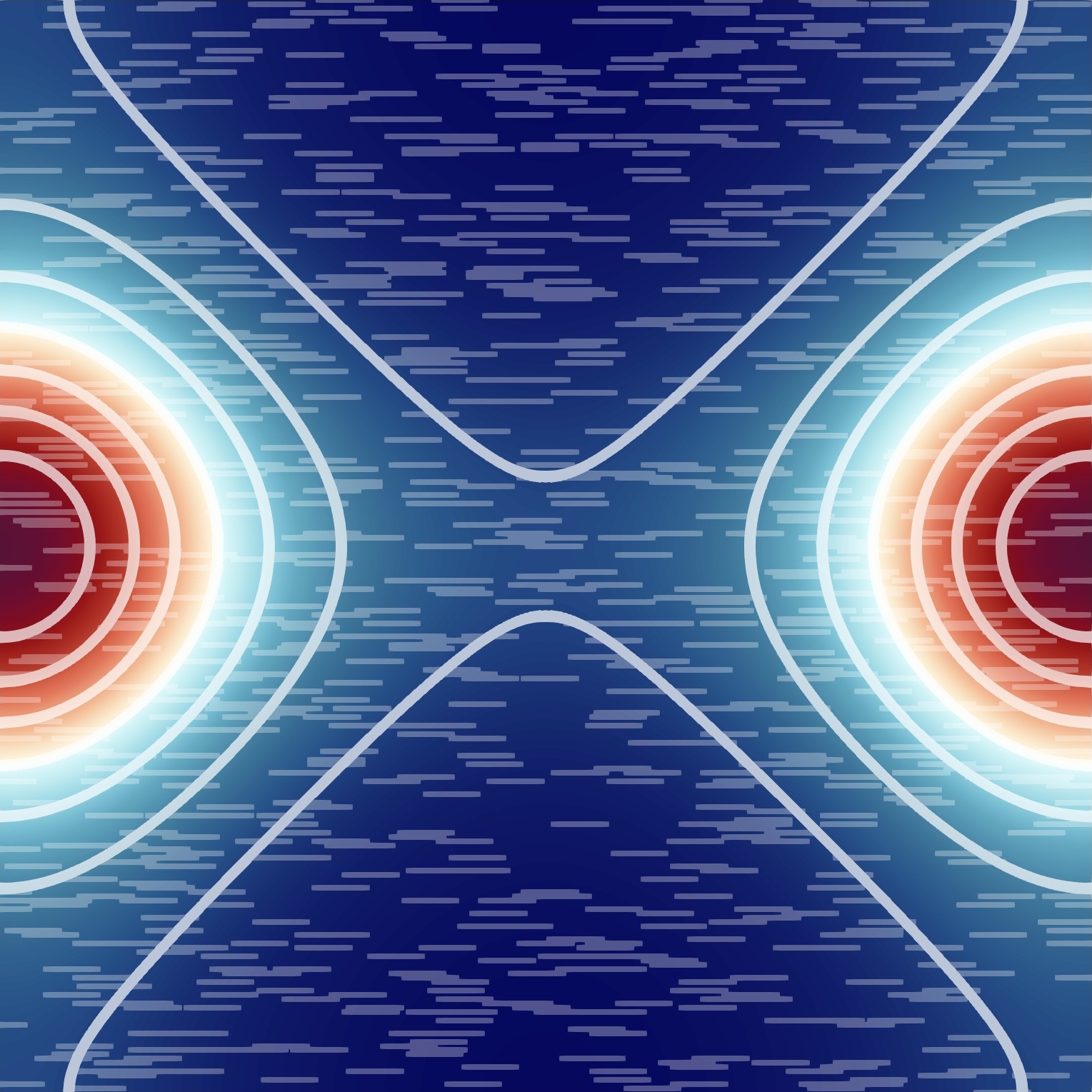}
    \end{subfigure}%
    \begin{subfigure}{0.30\textwidth}
        \centering

        \includegraphics[width = 0.9\textwidth]{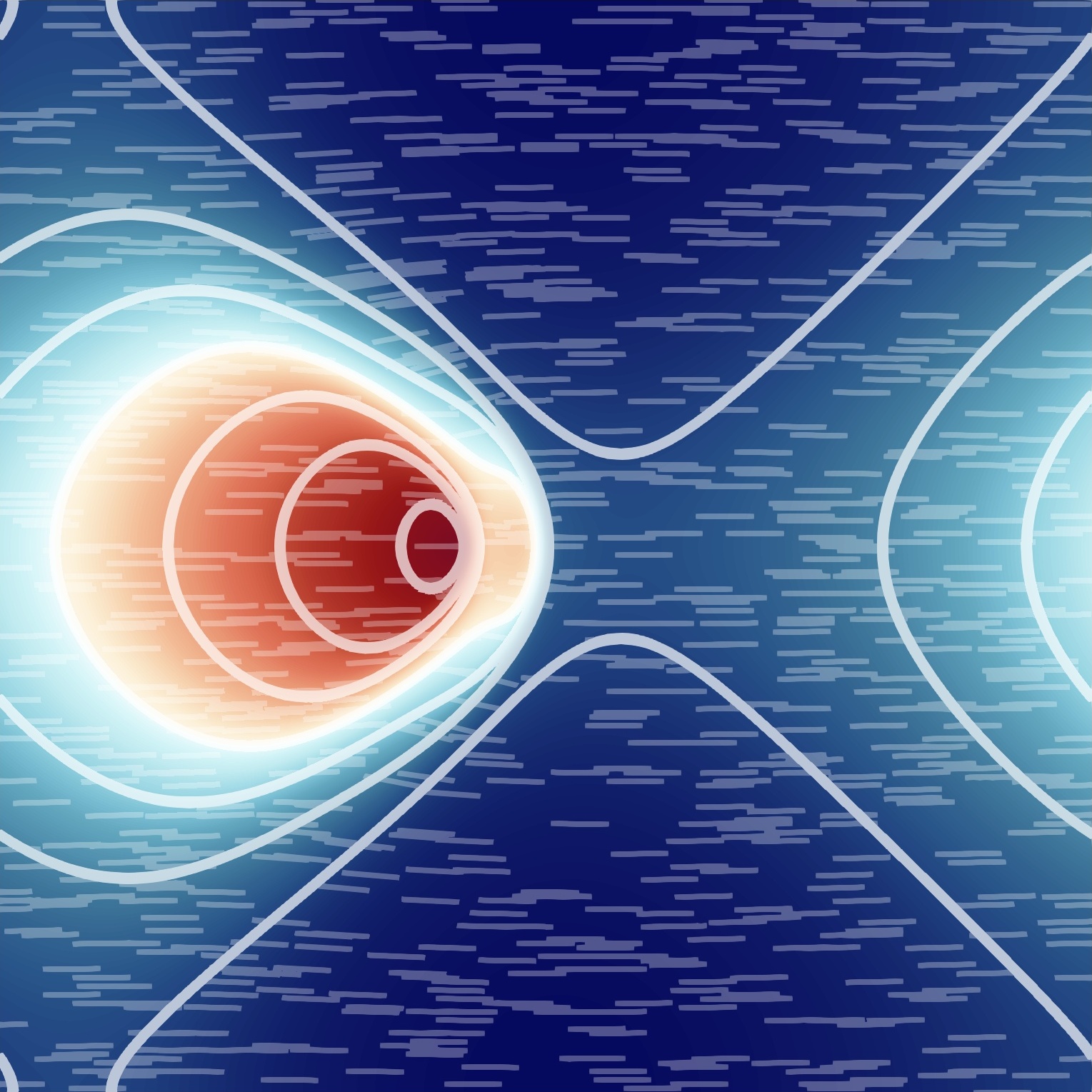}
    \end{subfigure}%
    \begin{subfigure}{0.30\textwidth}
        \centering

        \includegraphics[width = 0.9\textwidth]{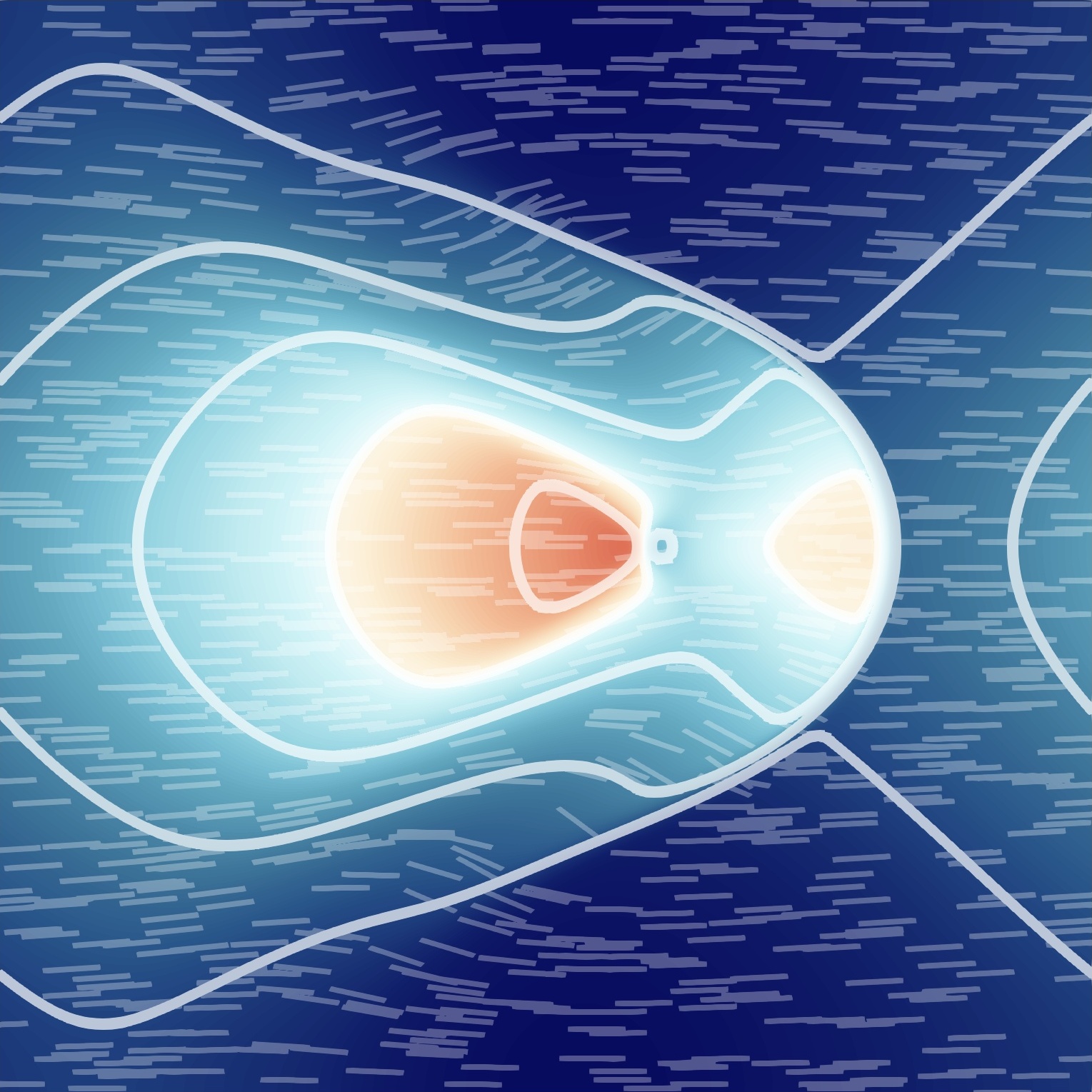}
    \end{subfigure}

    \vspace{3.5mm}

    \begin{subfigure}{0.04\textwidth}
        \centering

        $\rho$

        \raisebox{17.5mm}{}
    \end{subfigure}%
    \begin{subfigure}{0.30\textwidth}
        \centering

        \includegraphics[width = 0.9\textwidth]{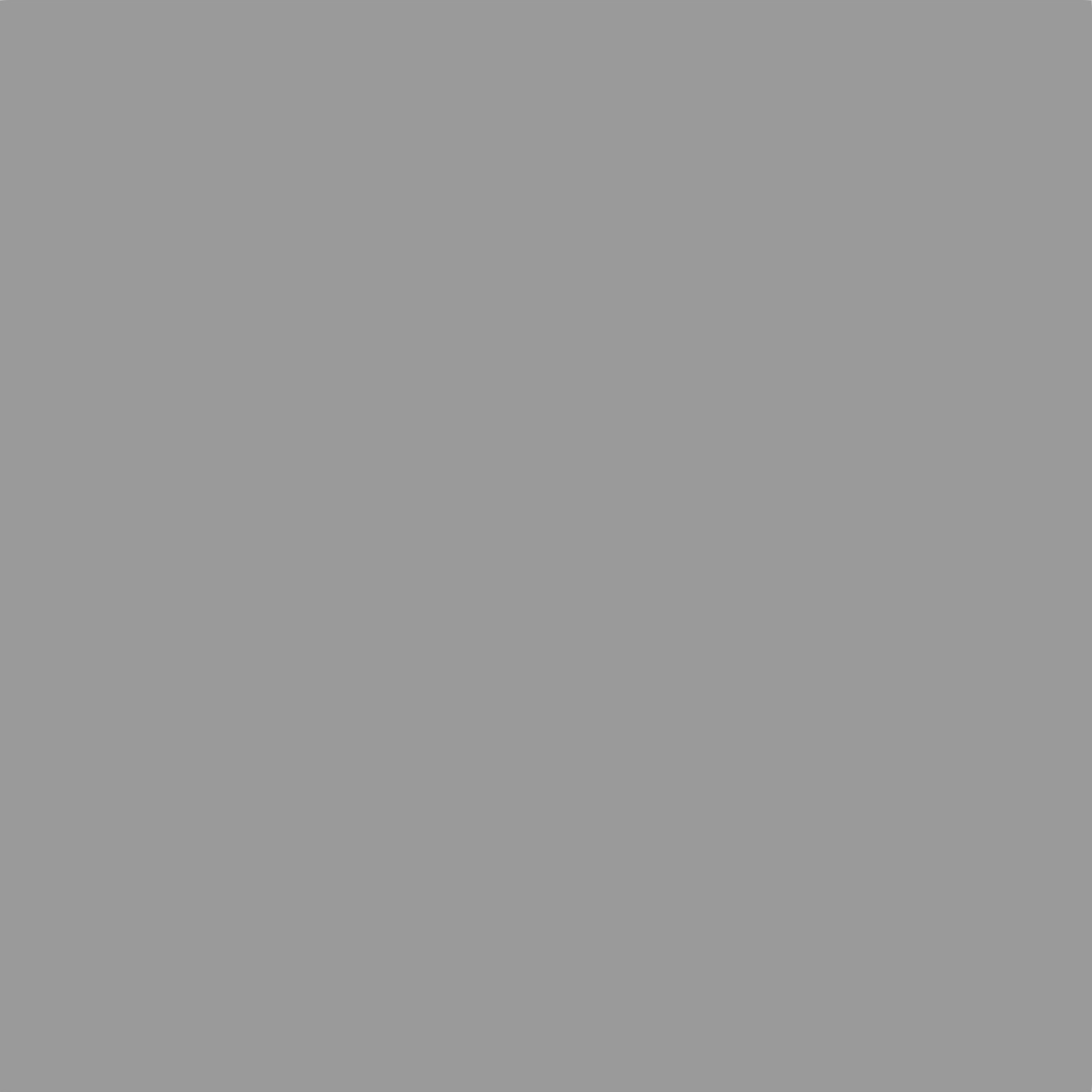}
    \end{subfigure}%
    \begin{subfigure}{0.30\textwidth}
        \centering

        \includegraphics[width = 0.9\textwidth]{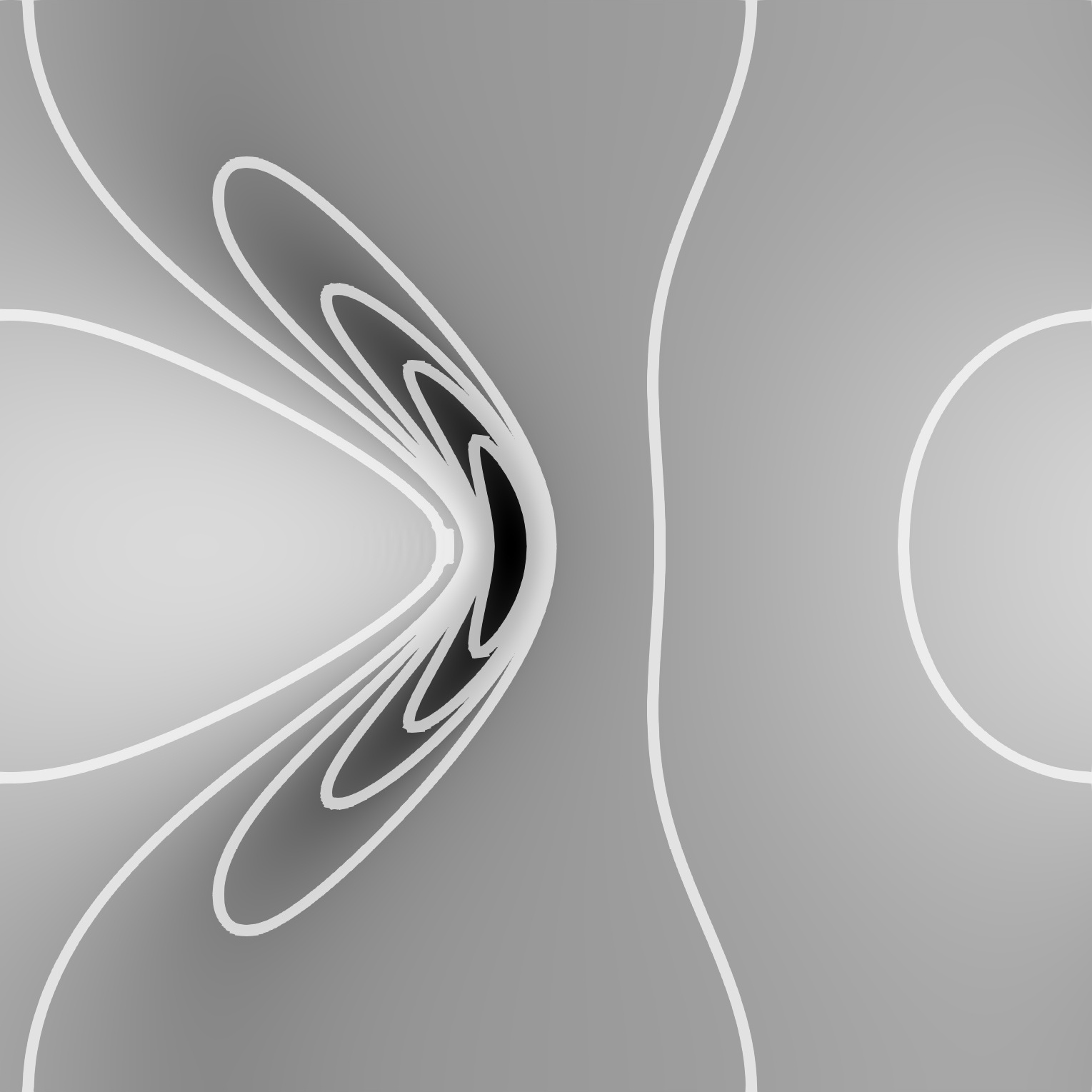}
    \end{subfigure}%
    \begin{subfigure}{0.30\textwidth}
        \centering

        \includegraphics[width = 0.9\textwidth]{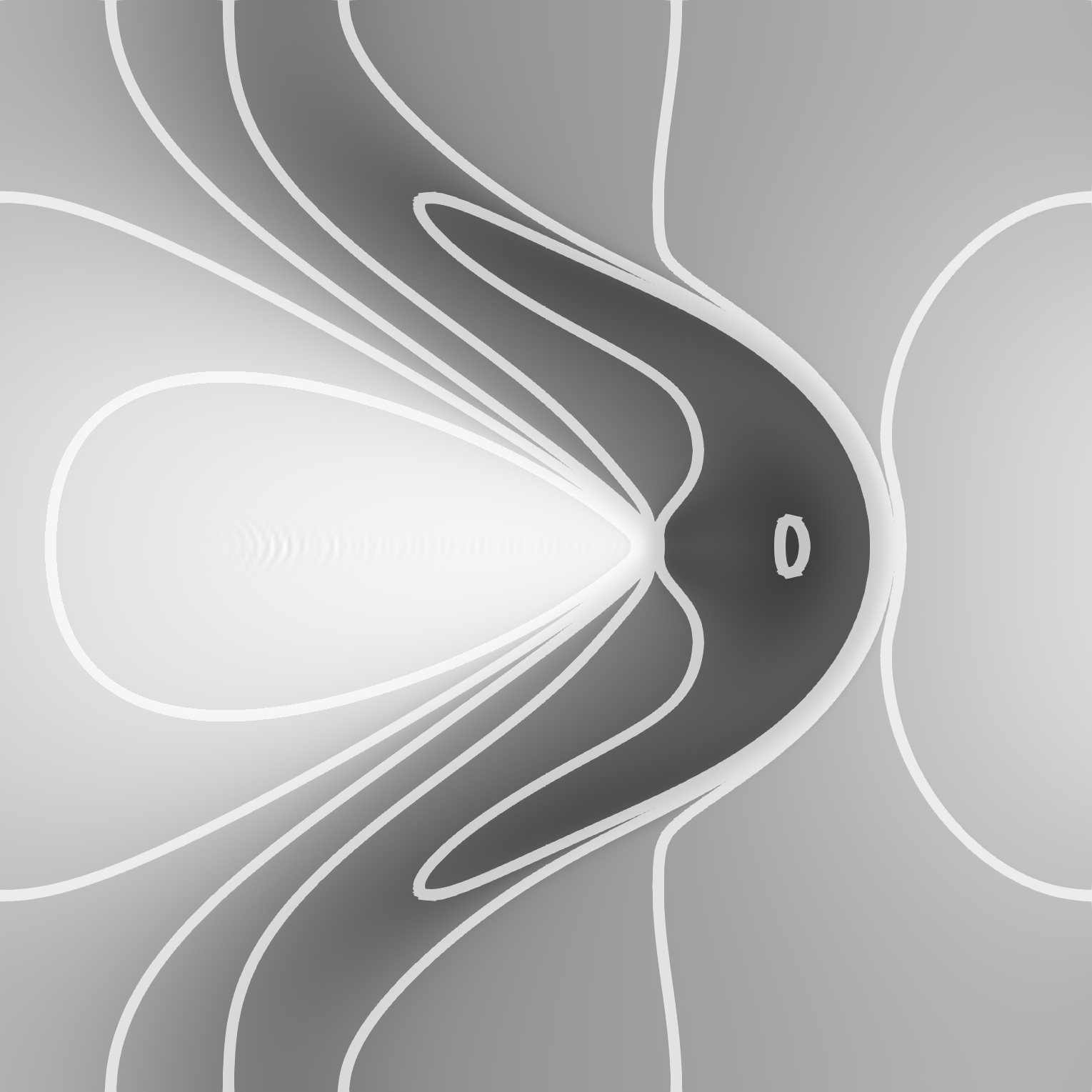}
    \end{subfigure}

    \vspace{3.5mm}

    \begin{subfigure}{0.04\textwidth}
        \centering

        $\theta$

        \raisebox{17.5mm}{}
    \end{subfigure}%
    \begin{subfigure}{0.30\textwidth}
        \centering

        \includegraphics[width = 0.9\textwidth]{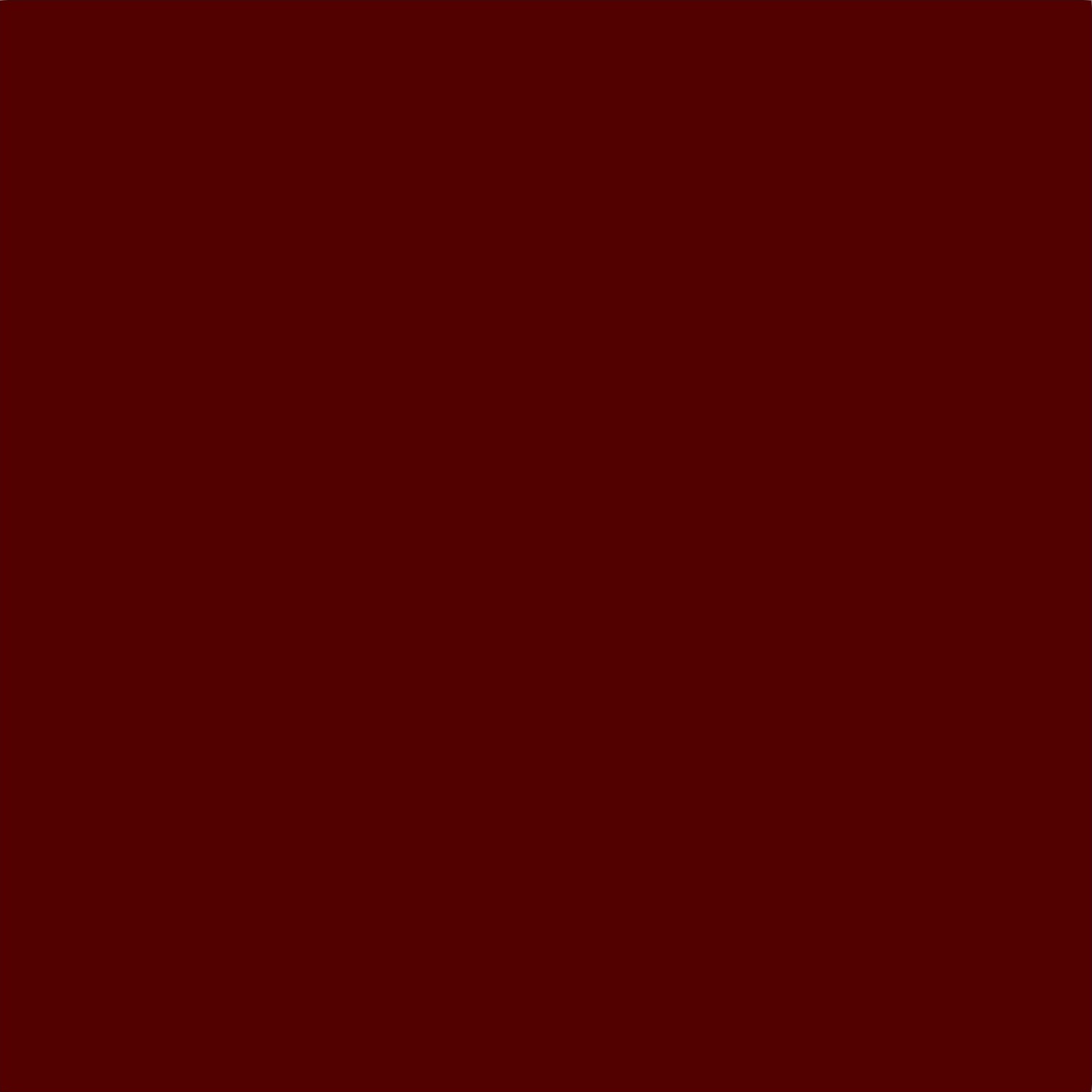}
    \end{subfigure}%
    \begin{subfigure}{0.30\textwidth}
        \centering

        \includegraphics[width = 0.9\textwidth]{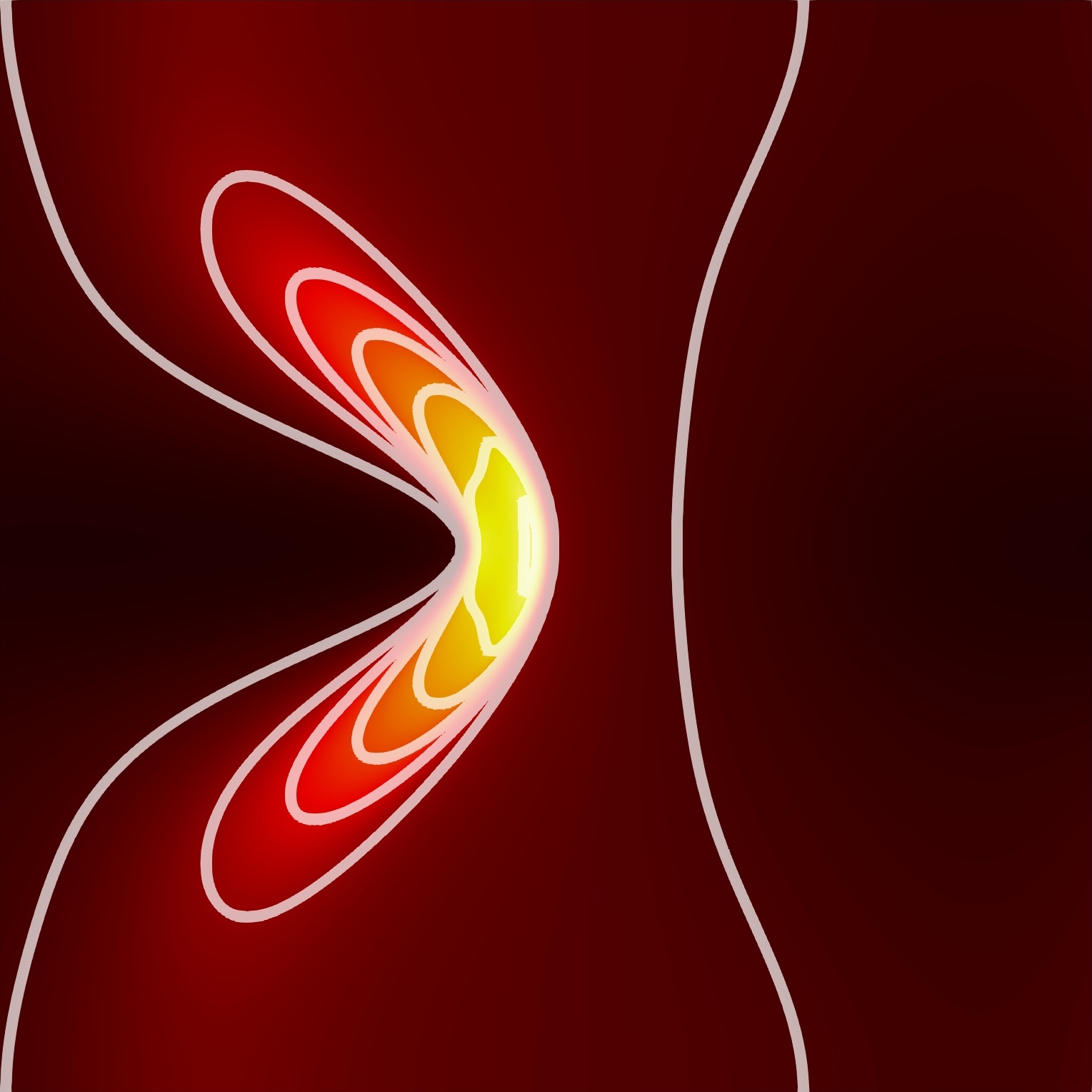}
    \end{subfigure}%
    \begin{subfigure}{0.30\textwidth}
        \centering

        \includegraphics[width = 0.9\textwidth]{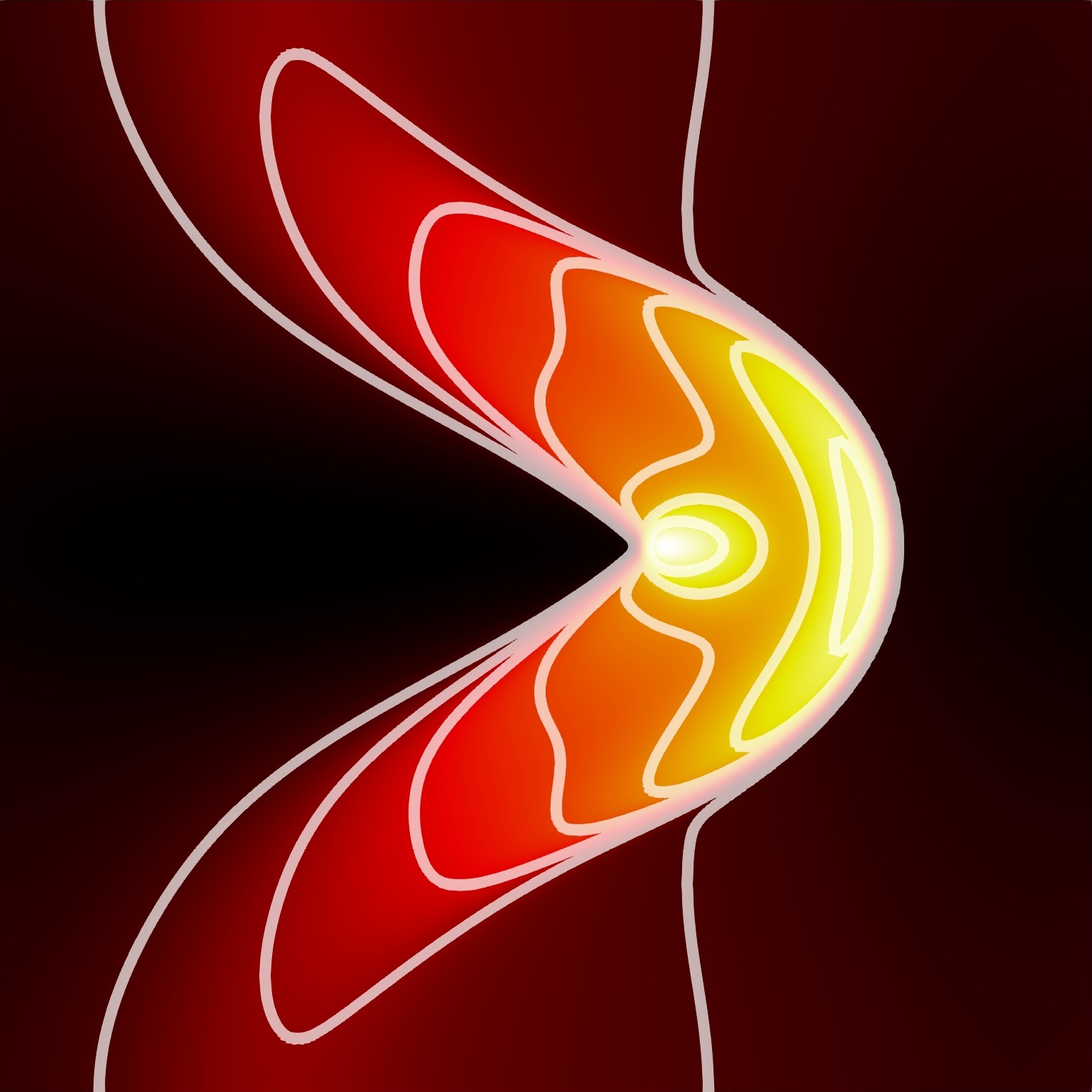}
    \end{subfigure}

    \vspace{3.5mm}

    \begin{subfigure}{0.04\textwidth}
        \centering

        $s$

        \raisebox{17.5mm}{}
    \end{subfigure}%
    \begin{subfigure}{0.30\textwidth}
        \centering

        \includegraphics[width = 0.9\textwidth]{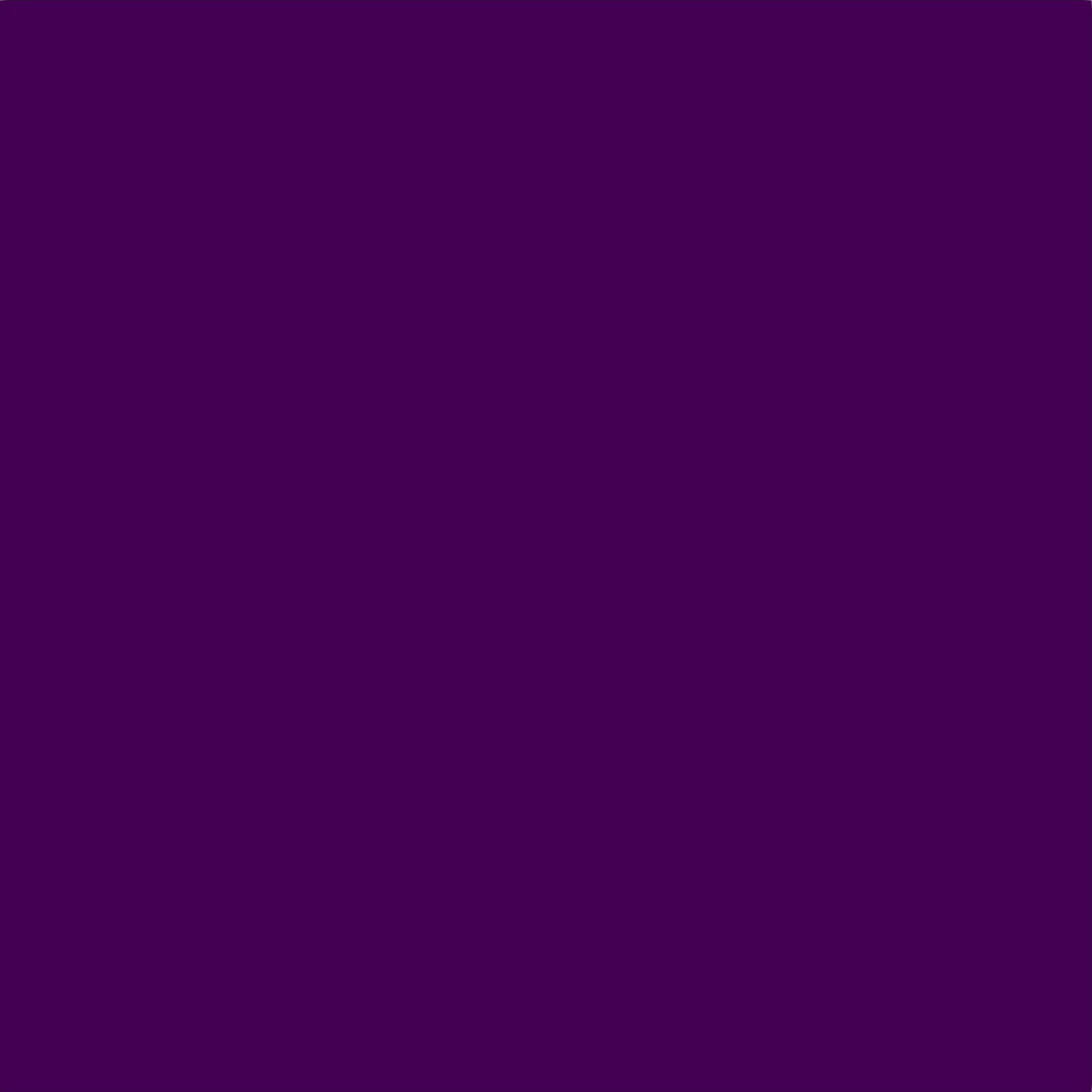}
    \end{subfigure}%
    \begin{subfigure}{0.30\textwidth}
        \centering

        \includegraphics[width = 0.9\textwidth]{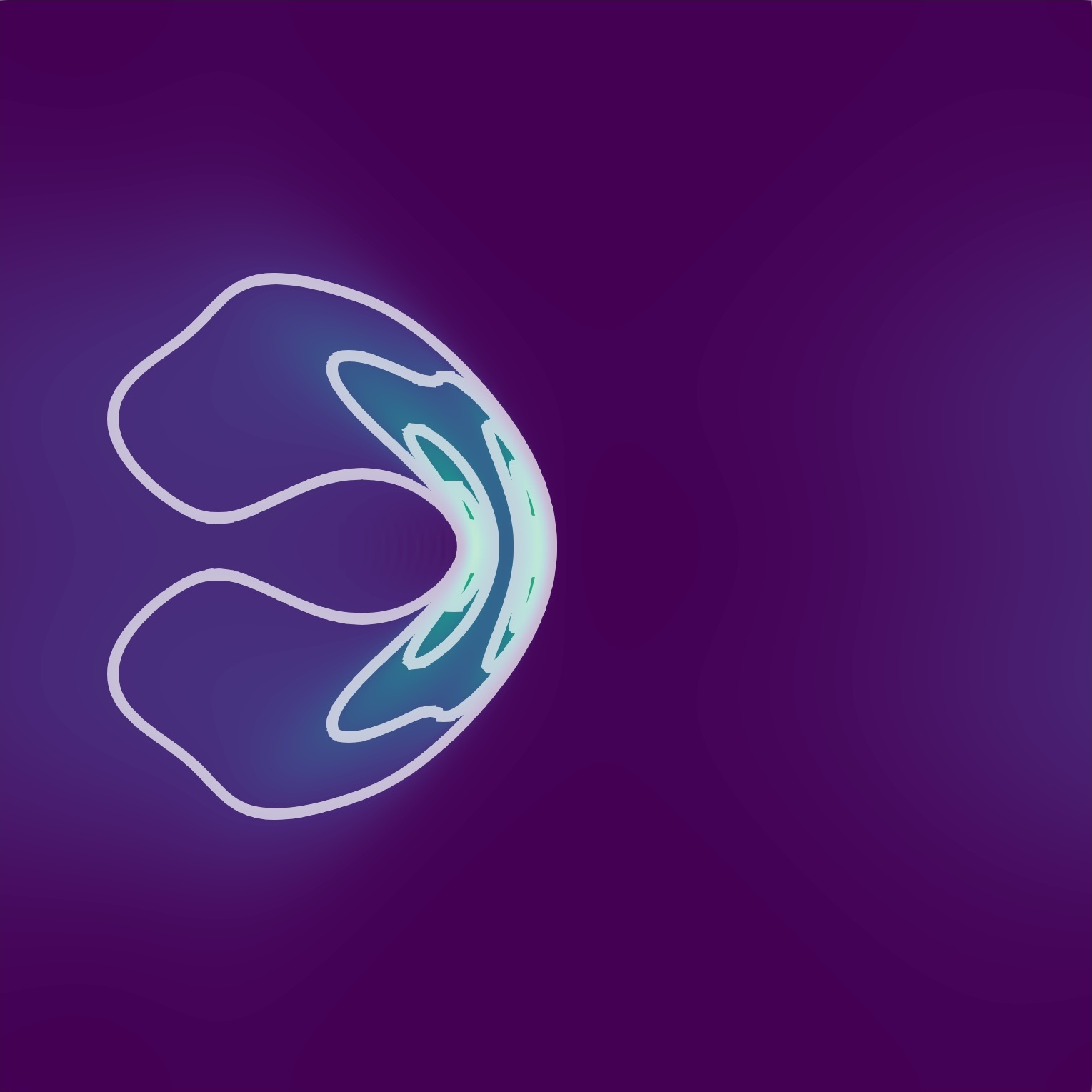}
    \end{subfigure}%
    \begin{subfigure}{0.30\textwidth}
        \centering

        \includegraphics[width = 0.9\textwidth]{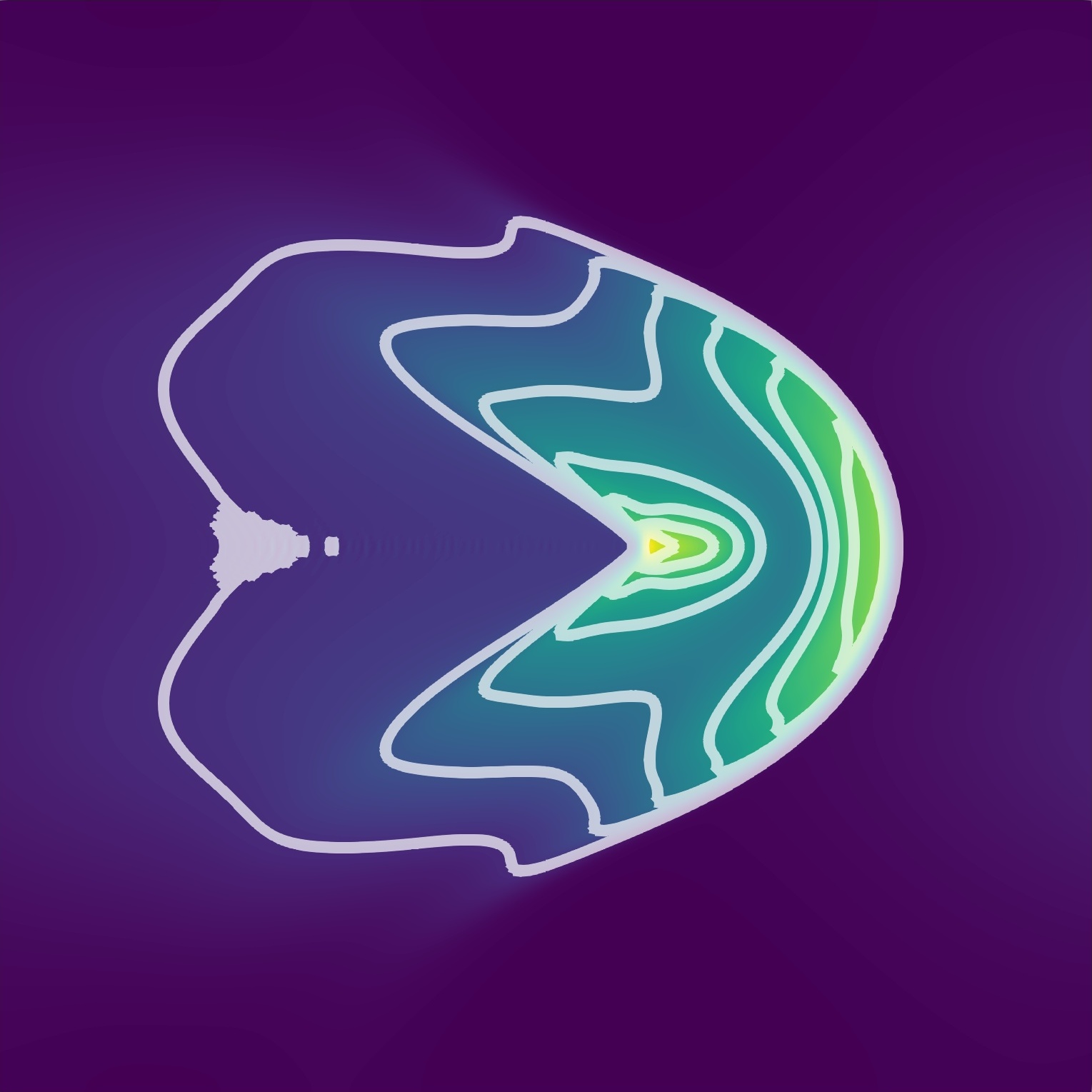}
    \end{subfigure}

    \vspace{2mm}

    \begin{subfigure}{0.04\textwidth}
        $\,$
    \end{subfigure}%
    \begin{subfigure}{0.30\textwidth}
        \centering

        $t = 0$
    \end{subfigure}%
    \begin{subfigure}{0.30\textwidth}
        \centering

        $t = 1 \cdot 2^{-4}$
    \end{subfigure}%
    \begin{subfigure}{0.30\textwidth}
        \centering

        $t = 2 \cdot 2^{-4}$
    \end{subfigure}

    \caption{Contours of the velocity magnitude $\|\bfu\|$, density $\rho$, temperature $\theta$, and specific entropy $s$ at times $t \in \{0, 1 \cdot 2^{-4}, 2 \cdot 2^{-4}\}$ in the supersonic test (Section~\ref{sec:supersonic_test}).}

    \label{fig:supersonic_plots}
\end{figure}

\begin{figure}[!ht]
    \captionsetup[subfigure]{justification = centering}
    \centering

    \begin{subfigure}{0.333\textwidth}
        \centering

        \begin{tikzpicture}
        \begin{axis}[
            xmin = 0,      xmax = 0.25,  xlabel = {$t$},  xtick distance = 0.1,
            ymin = 1e-16,  ymax = 1e0,   ymode = log,
            width = \textwidth, height = 1.5\textwidth,
            axis on top,
            legend pos=north east,
            legend style={font=\small},
        ]
        \input{plots/supersonic_invariants/mass/avcpg.tex}
        \addlegendentry{our scheme}
        \input{plots/supersonic_invariants/mass/im.tex}
        \addlegendentry{\shortstack{implicit \\ midpoint}}
        \end{axis}
        \end{tikzpicture}

        \caption{Mass, $|Q_1 - Q_1(0)|$}
    \end{subfigure}%
    \begin{subfigure}{0.333\textwidth}
        \centering

        \begin{tikzpicture}
        \begin{axis}[
            xmin = 0,      xmax = 0.25,  xlabel = {$t$},  xtick distance = 0.1,
            ymin = 1e-16,  ymax = 1e0,   ymode = log,
            width = \textwidth, height = 1.5\textwidth,
            axis on top,
        ]
        \input{plots/supersonic_invariants/momentum/avcpg.tex}
        \input{plots/supersonic_invariants/momentum/im.tex}
        \end{axis}
        \end{tikzpicture}

        \caption{Momentum, $\|\bfQ_2 - \bfQ_2(0)\|$}
    \end{subfigure}%
    \begin{subfigure}{0.333\textwidth}
        \centering

        \begin{tikzpicture}
        \begin{axis}[
            xmin = 0,      xmax = 0.25,  xlabel = {$t$},  xtick distance = 0.1,
            ymin = 1e-16,  ymax = 1e0,   ymode = log,
            width = \textwidth, height = 1.5\textwidth,
            axis on top,
        ]
        \input{plots/supersonic_invariants/energy/avcpg.tex}
        \input{plots/supersonic_invariants/energy/im.tex}
        \end{axis}
        \end{tikzpicture}

        \caption{Energy, $|Q_3 - Q_3(0)|$}
    \end{subfigure}

    \caption{Errors in different invariants over time within the supersonic test (Section~\ref{sec:supersonic_test}) for implicit midpoint and our proposed scheme.}

    \label{fig:supersonic_invariants}
\end{figure}

\section{Related literature} \label{sec:related_literature}

With the framework presented, we now discuss its relationship with other works, with a focus on structure-preserving discretization of the Navier--Stokes equations.

In 1994, Simo \& Armero \cite{Simo_Armero_1994} proposed a general form for 1-stage energy-preserving finite element integrators for the incompressible Navier--Stokes equations.
This was extended to the preservation of helicity by Rebholz \cite{Rebholz_2007};
the energy- and helicity-preserving scheme \eqref{eq:incompnsavcpg} derived in Section~\ref{sec:incompressible} can be viewed as an extension of this to higher order in time.
This idea has been generalized to incompressible magnetohydrodynamics by Hu, Lee \& Xu \cite{Hu_Lee_Xu_2021}; see also \cite{Laakmann_Hu_Farrell_2023, He_et_al_2025}.
Again, our framework reproduces these schemes when applied to these problems.

To the best of our knowledge, the \mbox{mass-,} \mbox{momentum-,} energy-conserving and entropy-preserving scheme for the compressible Navier--Stokes equations \eqref{eq:compns_avcpg} presented in Section~\ref{sec:compressible} is novel.
However, structure-preserving methods for the compressible Navier--Stokes equations have been well studied, in particular in the context of finite-volume methods \cite{Feireisl_LukacovaMedvidova_Mizerova_2020}.
The concept of entropy-stable methods was introduced and analyzed by Tadmor for the barotropic Euler equations \cite{Tadmor_1987, Tadmor_2003, Tadmor_2016}.
In the context of discontinuous Galerkin methods, auxiliary variables mirroring ours for entropy preservation were introduced by Parsani et al.~and Chan \cite{Parsani_et_al_2016, Chan_2018}, preserving the generation of entropy in the semi-discrete case, discretized in space only;
see also \cite{Chan_2020, Chan_2025}.
See Chen \& Shu \cite{Chen_Shu_2020} for a review on different types of entropy-stable schemes for the compressible Navier--Stokes equations.

The root-density variable was employed by Morinishi and Halpern \& Waltz \cite{Morinishi_2010, Halpern_Waltz_2018}; see also Nordstr\"om \cite{Nordstrom_2022}, where similar forms are used in the context of entropy generation.
Kennedy \& Gruber \cite{Kennedy_Gruber_2008} used a similar decomposition of the convective term to that of \eqref{eq:compns2b} to improve the skew-symmetry of their finite difference discretization and hence improve its energy conservation properties.
The problem of constructing energy-preserving finite element schemes for compressible flow was considered in the ideal limit by Gawlik \& Gay-Balmaz \cite{Gawlik_Gay-Balmaz_2021a}, building on a Lagrangian interpretation of the Navier--Stokes equations \cite{Pavlov_et_al_2011};
dissipative terms were later introduced by the same authors \cite{Gawlik_Gay-Balmaz_2021b}.

\added{Radau-IIA methods \cite[Sec.~IV.5]{Hairer_Wanner_2010} are known to have appealing dissipative properties for many systems and functionals.
At lowest order, this class of L-stable methods includes implicit or backward Euler, as studied by e.g.~Shu {et al.}~\cite{Shu_et_al_2025} for reaction--diffusion systems.
However, the discrete form of the preserved dissipation inequality typically differs from the continuous form, through some additional artificial dissipative term.
For instance, for the incompressible Navier--Stokes equations \eqref{eq:strongincompns} as considered in Section~\ref{sec:incompressible}, an implicit Euler time discretization of the semi-discrete form \eqref{eq:incompnsweak_lagrange} dissipates energy $Q_1$ over each timestep $T_n$ according to
\begin{subequations}
\begin{equation}
    Q_1(\bfu(t_{n+1})) - Q_1(\bfu(t_n))  =  - \frac{\Delta t_n}{\Re}\int\|\nabla\bfu(t_{n+1})\|^2 - \frac{1}{2}\int\|\bfu(t_{n+1}) - \bfu(t_n)\|^2,
\end{equation}
our scheme \eqref{eq:incompnsavcpg} on the other hand, at lowest order ($S=1$), exhibits energy dissipation
\begin{equation}
    Q_1(\bfu(t_{n+1})) - Q_1(\bfu(t_n))  =  - \frac{\Delta t_n}{\Re}\int\!\left\|\frac{1}{2}\nabla[\bfu(t_{n+1}) + \bfu(t_n)]\right\|^2\!.
\end{equation}
\end{subequations}
When applied to certain conservative discretizations of the the inviscid compressible Navier--Stokes equations (i.e.~the compressible Euler equations) as considered in Section~\ref{sec:compressible}, Tadmor \cite{Tadmor_2003} similarly observed that implicit Euler methods introduce additional artificial dissipation in the entropy.
While these additional discrete dissipations can aid stability on large timesteps, their presence implies such integrators do not accurately reproduce dissipation inequalities in the sense we desire in this work.
For both the incompressible and compressible Navier--Stokes equations, for example, one would still observe discrete dissipation in the inviscid limits $\Re = \infty$.}

\section{Conclusions}\label{sec:conclusions}
In this work we have proposed a framework for devising time discretizations that preserve conservation laws and dissipation inequalities.
The framework represents quantities of interest by means of their associated test functions, and introduces auxiliary variables for each.
Our approach generalizes existing works in the literature, and offers new structure-preserving discretizations for challenging problems such as the compressible Navier--Stokes equations. We have also successfully applied these ideas to deriving (multi-)conservative discretizations of Hamiltonian ordinary and partial differential equations.

A key challenge in rendering the proposed time discretizations competitive is that the nonlinear systems at each timestep are large and coupled, with possibly many auxiliary variables, and possibly many copies of each variable at high-order in time. This latter property is shared by fully implicit Runge--Kutta discretizations, and solvers for these systems have received increasing attention in recent years \cite{Southworth_2022, Axelsson_2023, Munch_2023}.
We hope that the clever solver approaches and implementation for such schemes can be extended to the systems arising from our framework.

\section{Code availability}
The code used for the numerical results employed Firedrake \cite{Ham_et_al_2023}, PETSc \cite{Balay_et_al_2024}, and MUMPS \cite{Amestoy_2001}.
All code for reproducing the numerical results of this work and major Firedrake components have been archived at \cite{Andrews_Farrell_2025_Zenodo}.

\section*{Acknowledgments}
We would like to acknowledge the kind support of W.~Arter and R.~Akers of the UK Atomic Energy Authority.

\appendix

\bibliographystyle{siamplain}
\bibliography{references}
\end{document}